\long\def\symbolfootnote[#1]#2{\begingroup%
\def\thefootnote{\fnsymbol{footnote}}\footnote[#1]{#2}\endgroup}
\newcommand{\Q}{\mathbb Q}
\newcommand{\R}{\mathbb R}
\newcommand{\C}{\mathbb C}
\newcommand{\Aut}{\textup{Aut}}
\newcommand{\Inn}{\textup{Inn}}
\newcommand{\Out}{\textup{Out}}
\newcommand{\diag}{\textup{diag}}
\newcommand{\secref}[1]{Section~\ref{#1}}
\newcommand{\thmref}[1]{Theorem~\ref{#1}}
\newcommand{\lemref}[1]{Lemma~\ref{#1}}
\newcommand{\corref}[1]{Corollary~\ref{#1}}
\newcommand{\defref}[1]{Definition~(\ref{#1})}
\def\imod#1{\allowbreak\mkern10mu({\operator@font mod}\,\,#1)}
\newtheorem{theorem}{Theorem}[section]
\newtheorem{lemma}[theorem]{Lemma}
\newtheorem{corollary}[theorem]{Corollary}
\newtheorem*{theorem*}{Theorem}
\theoremstyle{definition}
\newtheorem{remark}[theorem]{Remark}
\newtheorem{notation}[theorem]{Notation}
\newtheorem{question}[theorem]{Question}
\newtheorem{example}[theorem]{Example}
\newtheorem{definition}[theorem]{Definition}
\numberwithin{equation}{section}
\newcommand{\ignore}[1]{}
\newcommand{\mynote}[1]{}
\def \GL {\mathrm{GL}}
\def \SL {\mathrm{SL}}
\begin{document}
\setcounter{section}{0}
\setcounter{tocdepth}{1}
\title[Twisted Conjugacy Classes in Twisted Chevalley Groups]{Twisted Conjugacy Classes in Twisted Chevalley Groups}
\author[Sushil Bhunia]{Sushil Bhunia}
\author[Pinka Dey]{Pinka Dey}
\author[Amit Roy]{Amit Roy}
\thanks{Dey and Roy acknowledge financial support from UGC and CSIR, Govt. of India, respectively}
\address{Indian Institute of Science Education and Research (IISER) Mohali, Knowledge City,  Sector 81, S.A.S. Nagar 140306, Punjab, India}
\email{sushilbhunia@gmail.com}
\email{pinkadey11@gmail.com}
\email{amitiisermohali493@gmail.com, amitroy@iisermohali.ac.in}
\subjclass[2010]{Primary 20E45, 20G15}
\keywords{Twisted conjugacy, Chevalley groups, twisted Chevalley groups.}
\date{\today}
\begin{abstract}
Let $G$ be a group and $\varphi$ be an automorphism of $G$. Two elements $x, y\in G$ are said to be $\varphi$-twisted conjugate if $y=gx\varphi(g)^{-1}$ for some $g\in G$. We say that a group $G$ has the $R_{\infty}$-property if the number of $\varphi$-twisted conjugacy classes is infinite for every automorphism $\varphi$ of $G$. In this paper, we prove that twisted Chevalley groups over a field $k$ of characteristic zero have the $R_{\infty}$-property as well as the  $S_{\infty}$-property if $k$ has finite transcendence degree over $\Q$ or $\Aut(k)$ is periodic. 
\end{abstract}
\maketitle
\section{Introduction}
Let $G$ be a group and $\varphi$ be an automorphism of $G$. Two elements $x, y$ of $G$ are said to be twisted $\varphi$-conjugate, denoted by $x\sim_{\varphi} y$, if $y=gx\varphi(g)^{-1}$ for some $g\in G$. Clearly, $\sim_{\varphi}$ is an equivalence relation on $G$. The equivalence classes with respect to this relation are called the \textit{$\varphi$-twisted conjugacy classes} or \textit{the Reidemeister classes} of $\varphi$. If $\varphi=\mathrm{Id}$, then the $\varphi$-twisted conjugacy classes are  the usual conjugacy classes. The $\varphi$-twisted conjugacy class containing $x\in G$ is denoted by $[x]_{\varphi}$. The \textit{Reidemeister number} of $\varphi$, denoted by $R(\varphi)$, is the number of all $\varphi$-twisted conjugacy classes.  
A group $G$ has the \textit{$R_{\infty}$-property} if $R(\varphi)$ is infinite for every automorphism $\varphi$ of $G$.

The Reidemeister number is  closely related to the Nielsen number of a selfmap of a manifold, which is homotopy invariant. More precisely, for a compact connected manifold $ M $ of dimension at least $ 3 $ and a homeomorphism $f: M\to M$, the minimal number of the fixed-point set varying over all the maps  homotopic to $ f $ is same as the Nielsen number $ N(f) $. This number is bounded above by the Reidemeister number $R(f_{\#})$, where $f_{\#} $ is the induced map on $ \pi_1(M) $. If all the essential fixed-point classes have the same fixed-point index, then either $ N(f)=0 $ or $ N(f)=R(f_{\#}) $ (we refer the reader to \cite[p. 37, Theorem 5.6]{Jiang} for details). Since $ N(f) $ is always finite, we get $ N(f)=0 $ when $ R(f_{\#}) $ is infinite. Thus,  
$ R(f_{\#}) $ is infinite implies that we can deform $f$ to a fixed-point free map, up to homotopy.

 The problem of determining which classes of groups have the $R_{\infty}$-property is an active area of research initiated by Fel'shtyn and Hill \cite{fh94}. 
This problem has a long list of references, for example, see \cite{fg08, levitt07, MS, timur12,  timurnasy2016} just to name a few. 
For a nice introduction to the subject, we refer the readers to \cite{fel10, FN} and the references therein.
In this context, we recall a classical result by Steinberg, which says that for a connected linear algebraic group $G$ over an algebraically closed field $k$ and a surjective endomorphism $\varphi$ of $G$, if $\varphi$ has a finite set of fixed points then $G=\{g\varphi(g)^{-1}\mid g\in G\}=[e]_{\varphi}$, i.e., $R(\varphi)=1$ (see \cite[Theorem 10.1]{St}).  
In particular, any semisimple linear algebraic group $G$ over an algebraically closed field $k$ with $\operatorname{char}k=p>0$ possesses an automorphism $\varphi$ (Frobenius automorphism) with finitely many fixed points. Then by the Steinberg theorem, it implies that $R(\varphi)=1$. 

The theory of Chevalley groups was introduced by Claude Chevalley himself in \cite{chevalley55}, and further developed by Steinberg \cite{steinberg59}. Motivation of the present work comes from the results of Nasybullov and Fel'shtyn. They proved that a Chevalley group $G$ over a field $k$ of  characteristic zero possesses the  $R_{\infty}$-property if the transcendence degree of $k$ over $\mathbb{Q}$ is finite and $G$ is of type $\Phi$ (here $\Phi$ is a root system corresponding to $G$), or $\Aut(k)$ is periodic and $G$ is of type $\Phi\neq A_1$ (see \cite[Theorem 3.2]{FN}. Also, see \cite[Theorems 1, 2]{NaT} for details). 
It is worth mentioning that a reductive linear algebraic group $G$ over an algebraically closed field $k$ of characteristic zero possesses the  $R_{\infty}$-property if the transcendence degree of $k$ over $\Q$ is finite and the radical of $G$ is a proper subgroup of $G$ (see \cite[Theorem 4.1]{FN}). Later Nasybullov generalized the results from Chevalley groups over fields to Chevalley groups (of classical types) over certain rings. A Chevalley group $G$ of type $A_l, B_l, C_l$ or $D_l$ over a local integral domain $k$ of zero characteristic possesses the $R_{\infty}$-property if $\Aut(k)$ is periodic (see \cite[Theorems 1, 2]{timur12} and \cite[Theorem 1]{timurnasy2016}). 
Recently, Nasybullov showed that Chevalley groups (of types $A_n, B_n, C_n, D_n$) over an algebraically closed field $k$ do not satisfy the $R_{\infty}$-property, when $k$ has infinite transcendence degree over $\Q$ (see \cite[Theorem 7]{timur2019} and \cite[Theorem 8]{nasy2019}).  So unless otherwise specified, we will assume that the field has zero characteristic. 
\vskip 2mm

The following natural question arises in this context: 
\begin{question}\label{question1}
 What can we say about the $R_{\infty}$-property of the twisted Chevalley groups (also known as Steinberg groups)?
\end{question}
The main results of this paper are the following, which solves Question \ref{question1} over a field $k$ of characteristic zero with finite transcendence degree over $\Q$ or $\Aut(k)$ is periodic. In a sequel,
we will do this for twisted Chevalley groups over an integral domain $k$ under the same assumptions on $k$ as Nasybullov did. Since we are considering only zero characteristic field, then non-trivial graph automorphisms exist only for the root systems of types 
$A_l \, (l\geq 2)$, $D_l\, (l\geq 4)$ and $E_6$. Suppose $\Phi$ is an irreducible root system corresponding to the adjoint Chevalley group $G$ of types either $A_l \, (l\geq 2)$ or $D_l\, (l\geq 4)$ or $E_6$. Let $\sigma=\overline{\rho}f$ be a product of graph and field automorphisms of $G$, and $G'_{\sigma}$ be the corresponding twisted Chevalley group of adjoint type (which is simple). In general any twisted Chevalley group also denoted by $G'_{\sigma}:=\widetilde{G}'_{\sigma}/Z$, where $\widetilde{G}'_{\sigma}$ is the twisted Chevalley group of universal type, and $Z\leq Z(\widetilde{G}'_{\sigma})$ is a central subgroup (for details, see \secref{tchev}). Now that we have the notations laid out, we state our main theorems of this paper. 
\begin{theorem}\label{mainthm1}
Let $G'_{\sigma}$ be a twisted Chevalley group corresponding to the irreducible root system $\Phi$ over a field $k$ of $\mathrm{char}\; k=0$. If the transcendence degree of $k$ over $\mathbb{Q}$ is finite, then $G'_{\sigma}$ possesses the $R_{\infty}$-property.
\end{theorem}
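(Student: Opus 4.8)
The plan is to follow the strategy that Fel'shtyn--Nasybullov use for untwisted Chevalley groups, adapted to the twisted setting. The key idea is that if $k$ has finite transcendence degree over $\Q$, then $\Aut(k)$ does not act ``too wildly'', and in particular one can find elements of $k$ (or of an appropriate coordinate subring) that are not moved into a prescribed finite-index-type subset by any field automorphism. Concretely, I would first reduce the problem to the universal (or simply connected) twisted Chevalley group $\widetilde{G}'_{\sigma}$ and a suitable quotient: since $R(\varphi)$ for $G'_{\sigma} = \widetilde{G}'_{\sigma}/Z$ is bounded below in terms of the Reidemeister number of an induced automorphism on $\widetilde{G}'_{\sigma}$ (up to the finite central factor $Z$), it suffices to produce infinitely many twisted conjugacy classes upstairs. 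Then I would analyze the structure of $\Aut(G'_{\sigma})$: every automorphism $\varphi$ of the (adjoint, simple) twisted Chevalley group is, up to an inner automorphism, a product of a diagonal automorphism and a field automorphism $f$ induced by some $\theta \in \Aut(k)$ commuting with $\sigma$. (Graph automorphisms of $\Phi$ already become inner-by-diagonal after twisting, which is why the twisted groups have a simpler outer automorphism structure than the untwisted ones.)

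\textbf{The core construction.} With $\varphi$ written as $\varphi = \mathrm{inn}_h \circ \delta \circ f$ where $f$ comes from $\theta \in \Aut(k)$, I would look at the root subgroups $x_\alpha(t)$ (and, in the twisted group, the combined one-parameter subgroups $x_\alpha^{\sigma}(t)$ attached to $\sigma$-orbits of roots), and examine the $\varphi$-twisted conjugacy relation restricted to a single such subgroup, or to a torus element. The point is that $x_\alpha^{\sigma}(t) \sim_{\varphi} x_\alpha^{\sigma}(t')$ forces a relation of the form $t' = \lambda \cdot \theta(t)$ (plus possibly a bounded ambiguity) for some $\lambda$ in a finitely generated subgroup of $k^\times$ determined by $h$ and $\delta$. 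Since $\theta$ has finite transcendence-degree ``room'' to act, the orbits of $t \mapsto \lambda\theta(t)$ cannot exhaust $k$: one invokes the lemma (this is exactly the mechanism in \cite{FN, NaT}) that if $\mathrm{trdeg}_{\Q} k < \infty$ then for any finitely generated subgroup $\Lambda \leq k^\times$ and any $\theta \in \Aut(k)$, the set of orbits of the action $t \mapsto \lambda \theta(t)$ on $k$ (or on $k^\times$) is infinite. This yields infinitely many distinct values of $t$ representing distinct $\varphi$-twisted classes, hence $R(\varphi) = \infty$.

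\textbf{Main obstacle.} The technical heart — and the place where the twisted case genuinely differs from Nasybullov's — is controlling the twisted conjugacy relation on the chosen subgroup: I must show that conjugating $x_\alpha^{\sigma}(t)$ by an arbitrary $g \in G'_{\sigma}$ and applying $\varphi$ can only produce elements of the form $x_\alpha^{\sigma}(\lambda\theta(t))$ up to a controlled (finite or finitely-generated) set of correction terms, so that the ``number of classes'' really is governed by the orbit count above. This requires the Bruhat decomposition and careful commutator relations in $\widetilde{G}'_{\sigma}$, together with knowing that the relevant structure constants and the $\sigma$-action on the torus are defined over $\Q$ (or over the prime field), so that $\theta$ acts on them trivially. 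I would handle this by picking $\alpha$ to be a highest root (or a root whose $\sigma$-orbit is a single root of the folded system) so that the associated root subgroup is normalized in a clean way by a maximal torus and the unipotent radical of a Borel, reducing the centralizer computation to a rank-one (or $\mathrm{SU}_3$-type) calculation. Once that normal-form lemma is in place, the counting argument and the transcendence-degree lemma finish the proof; the reduction from $G'_\sigma$ to $\widetilde{G}'_\sigma$ costs only the finite factor $|Z|$ and does not affect infiniteness.
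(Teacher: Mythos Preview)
Your outline has the right architecture---strip off inner automorphisms, write $\varphi$ as (diagonal)$\circ$(field), produce an infinite family of test elements, and invoke a transcendence-degree lemma---but it diverges from the paper in two places, and one of them is a real gap.

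\textbf{Reduction direction.} You push the problem \emph{up} to the universal cover $\widetilde{G}'_\sigma$; the paper pushes it \emph{down} to the adjoint quotient $G'_\sigma/Z(G'_\sigma)$. The downward direction is free: the center is characteristic, so any $\varphi$ descends, and \lemref{msqtog} gives $R(\varphi)\ge R(\overline{\varphi})$. Your upward direction needs a lifting of $\varphi$ to $\widetilde{G}'_\sigma$, which you do not justify; and your inequality ``$R(\varphi)$ on $G'_\sigma$ is bounded below by $R$ upstairs up to $|Z|$'' is stated in the wrong direction (it is $R(\widetilde\varphi)\le |Z|\cdot R(\overline{\varphi})$ that holds for finite $Z$). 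This is fixable, but the adjoint reduction is the clean one, and it is also where Steinberg's description of $\Aut(G'_\sigma)$ lives.

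\textbf{Choice of test elements.} This is the substantive gap. You propose unipotent root elements $x_\alpha^\sigma(t)$ and assert that $x_\alpha^\sigma(t)\sim_\varphi x_\alpha^\sigma(t')$ forces $t'=\lambda\,\theta(t)$ up to a controlled ambiguity. That normal-form statement is exactly the hard part, and Bruhat decomposition alone does not give it: an arbitrary $g\in G'_\sigma$ can conjugate $x_\alpha^\sigma(t)$ out of the root subgroup entirely, so there is no a priori reason the $\varphi$-twisted class of $x_\alpha^\sigma(t)$ meets $\{x_\alpha^\sigma(t')\}$ only at the predicted values. The paper avoids this by working with \emph{torus} elements
\[
g_i=h_{\alpha_1}(p_{i1})\cdots h_{\alpha_l}(p_{il}),\qquad p_{i1}<p_{i2}<\cdots\ \text{distinct primes},
\]
written as diagonal matrices in the Chevalley basis. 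Because the entries are rational, the field automorphism fixes $g_i$; because $g_i$ is diagonal, the diagonal automorphism fixes $g_i$; hence $\varphi(g_i)=g_i$. The twisted-conjugacy relation $g_1=z_ig_i\varphi(z_i^{-1})$ then becomes the \emph{matrix} equation $\overline{f}(z_i)=(g_1\widetilde{h})^{-1}z_i(g_i\widetilde{h})$, which on each entry of $z_i$ reads $f(q_{i,mn})=c_{mn}\,a_{in}\,q_{i,mn}$ with the $a_{in}\in\Q$ having pairwise disjoint prime support. Nasybullov's \lemref{lemma5} then forces almost all entries in the left $|\Phi|$ columns of $z_i$ to vanish, so $\det z_i=0$, a contradiction.

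In short: your orbit-count lemma on $t\mapsto\lambda\theta(t)$ is morally the same input as the paper's \lemref{lemma5}, but the paper applies it entrywise to the conjugating element $z_i$ rather than to a root-subgroup parameter, and this is what makes the argument go through without the delicate unipotent normal-form lemma you flag as the main obstacle. Replacing your unipotent test elements by the rational-prime torus elements $g_i$ (so that $\varphi(g_i)=g_i$) is the missing idea.
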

Examples of such fields are $\Q(T_1, T_2, \ldots, T_n), \overline{\Q}, \overline{\Q}(T_1, T_2, \ldots, T_n)$, etc.
\begin{theorem}\label{mainthm2}
Let $G'_{\sigma}$ be a twisted Chevalley group corresponding to the irreducible root system $\Phi$ over a field $k$ of $\mathrm{char}\; k=0$. If $\Aut(k)$ is periodic, then $G'_{\sigma}$ possesses the $R_{\infty}$-property.
\end{theorem}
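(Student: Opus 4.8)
The plan is to deduce the statement from the structure of $\Aut(G'_\sigma)$ together with a twisted‑norm argument, exploiting that a periodic $\Aut(k)$ forces every field automorphism occurring here to have finite order. Throughout write $G_{\mathrm{ad}}:=\widetilde G'_\sigma/Z(\widetilde G'_\sigma)$ for the adjoint (simple) twisted Chevalley group.

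First I would reduce to $G_{\mathrm{ad}}$. Since $G'_\sigma=\widetilde G'_\sigma/Z$ surjects onto $G_{\mathrm{ad}}$ with central kernel $\overline Z=Z(\widetilde G'_\sigma)/Z$, and $G_{\mathrm{ad}}$ is simple, one gets $\overline Z=Z(G'_\sigma)$, which is characteristic; hence every $\varphi\in\Aut(G'_\sigma)$ induces $\overline\varphi\in\Aut(G_{\mathrm{ad}})$, and the quotient map carries $\varphi$‑twisted conjugacy classes onto $\overline\varphi$‑twisted conjugacy classes, so $R(\varphi)\ge R(\overline\varphi)$. Thus it suffices to prove that $G_{\mathrm{ad}}$ has the $R_\infty$‑property. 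Fix $\overline\varphi\in\Aut(G_{\mathrm{ad}})$. Using that the bijection $x\mapsto xg$ of Reidemeister classes gives $R(\Inn(g)\circ\psi)=R(\psi)$ for all $g,\psi$, and the description of $\Aut(G_{\mathrm{ad}})$ from \secref{tchev} (every automorphism is inner composed with a field automorphism — no diagonal part since $G_{\mathrm{ad}}$ is adjoint, and no graph part since the twisting has already used up the graph symmetry), we may replace $\overline\varphi$ by a field automorphism $\varphi_0=f_\gamma$, with $\gamma\in\Aut(k)$, that stabilises a standard maximal torus $T'_\sigma$ and a Borel subgroup of $G_{\mathrm{ad}}$ and acts on $T'_\sigma$ coordinate‑wise through $\gamma$. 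Because $\Aut(k)$ is periodic, $\gamma$ has finite order $m$, so $\varphi_0^{\,m}=\mathrm{Id}$.

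Next I would run the twisted norm $\nu(x)=x\,\varphi_0(x)\cdots\varphi_0^{\,m-1}(x)$. From $\varphi_0^{\,m}=\mathrm{Id}$ one computes $\nu\bigl(s\,x\,\varphi_0(s)^{-1}\bigr)=s\,\nu(x)\,s^{-1}$, so $[x]_{\varphi_0}\mapsto[\nu(x)]$ is a well‑defined map onto a set of ordinary conjugacy classes of $G_{\mathrm{ad}}$; it therefore suffices to produce infinitely many elements with distinct images. Let $\widetilde\alpha$ be the highest root of $\Phi$ and $\eta(t):=h_{\widetilde\alpha}(t)$ the associated one‑parameter subgroup; since $\widetilde\alpha$ is fixed by every graph automorphism and $\widetilde\alpha^{\vee}$ is primitive in the coroot lattice in types $A_l,D_l,E_6$, the homomorphism $\eta$ is injective and $\eta(q)\in T'_\sigma$ for $q\in\Q^{\times}$. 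For such $q$ we have $\varphi_0(\eta(q))=\eta(\gamma(q))=\eta(q)$, so $\eta(q)$ is $\varphi_0$‑fixed and $\nu(\eta(q))=\eta(q)^{m}=\eta(q^{m})$. As $q$ runs through the integers $\ge 2$, the elements $\eta(q^{m})$ are pairwise distinct semisimple elements of $T'_\sigma$, and two elements of a maximal torus that are conjugate in $G_{\mathrm{ad}}$ are conjugate under the finite relative Weyl group $N(T'_\sigma)/T'_\sigma$; hence only finitely many of them share a conjugacy class, the classes $[\eta(q)]_{\varphi_0}$ are infinite in number, and $R(\varphi)=R(\varphi_0)=\infty$. (When $\gamma=\mathrm{Id}$, so $\varphi$ is inner, the same computation merely recovers that $G_{\mathrm{ad}}$ has infinitely many conjugacy classes.) As $\varphi$ was arbitrary, $G_{\mathrm{ad}}$ — hence every $G'_\sigma$ — has the $R_\infty$‑property, which is \thmref{mainthm2}.

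The part I expect to be the main obstacle is the description of $\Aut(G_{\mathrm{ad}})$: that an abstract automorphism of an adjoint twisted Chevalley group over the infinite field $k$ is inner times field, with no residual diagonal automorphism and — crucially for the twisted groups — no residual graph automorphism (equivalently, the relative Dynkin diagram, of type $BC_n$, $C_n$, $B_n$, $F_4$ or $G_2$, admits no symmetry), and that the field part is genuinely induced by an element of $\Aut(k)$. This is exactly where the argument parts company with the split case treated by Fel'shtyn and Nasybullov, and I would isolate it as a lemma in \secref{tchev}, appealing to Borel–Tits‑type rigidity for quasi‑split isotropic groups; should a finite graph part in fact survive in some borderline case, one replaces $m$ by a suitable multiple $N$, writes $\varphi_0^{\,N}=\Inn(g_0)$ with $g_0\in C_{G_{\mathrm{ad}}}(T'_\sigma)=T'_\sigma$, and separates the classes $[\nu(x)g_0]$ instead. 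The remaining ingredients used — injectivity of $\eta$ on $\Q^{\times}$, the equality $C_{G_{\mathrm{ad}}}(T'_\sigma)=T'_\sigma$, and Weyl‑conjugacy of conjugate torus elements — are standard facts about reductive groups.
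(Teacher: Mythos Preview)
Your overall strategy---reduce to the adjoint quotient, strip off the inner part, exploit periodicity of $\Aut(k)$ to form a twisted norm of length $m$, and then separate rational torus elements by a conjugacy invariant---is exactly the route the paper takes. The genuine gap is in your description of $\Aut(G_{\mathrm{ad}})$: you assert ``no diagonal part since $G_{\mathrm{ad}}$ is adjoint'', and your backup plan is written only for a residual \emph{graph} part. But diagonal automorphisms do \emph{not} vanish for adjoint (twisted) Chevalley groups. In the Chevalley--Steinberg framework the adjoint group of type $A_l$ is $\mathrm{PSL}_{l+1}(k)$, not $\mathrm{PGL}_{l+1}(k)$, and $\mathrm{PGL}/\mathrm{PSL}\cong k^\times/(k^\times)^{l+1}$ already supplies outer diagonal automorphisms; after twisting, $\mathrm{PU}_{l+1}/\mathrm{PSU}_{l+1}$ does the same for $G_{\mathrm{ad}}\cong\mathrm{PSU}_{l+1}$. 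Steinberg's theorem (\thmref{staut}) therefore gives only $\varphi_0=\overline f\circ d_h$ after removing the inner factor, and $d_h$ may well have infinite order. Hence neither $\varphi_0^{\,m}=\mathrm{Id}$ nor your fallback $\varphi_0^{\,N}\in\Inn(G_{\mathrm{ad}})$ is available in general, and the map $[x]_{\varphi_0}\mapsto[\nu(x)]$ is not well-defined as stated.

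The repair is small and is precisely what the paper does. Keep $\varphi_0=d_{\widetilde h}\circ\overline f$ (using \lemref{normality}); your rational torus elements $\eta(q)$ remain $\varphi_0$-fixed, since $d_{\widetilde h}$ is conjugation by a diagonal matrix and $\overline f$ fixes $\Q$. Then $\varphi_0^{\,m}=d_{h'}$ for some $h'\in\widehat{H'}$, and the norm identity becomes $\nu\bigl(s\,x\,\varphi_0(s)^{-1}\bigr)h'=s\,\bigl(\nu(x)h'\bigr)\,s^{-1}$, so it is $[x]_{\varphi_0}\mapsto[\nu(x)h']$ (ordinary conjugacy in $\GL_{|\Phi|+|\Delta|}(k)$) that is well-defined. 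For $x=\eta(q)$ this yields the diagonal matrix $\eta(q^{m})h'$, and the paper separates these by the trace (\lemref{lemma6} and \lemref{lemma7}) rather than by Weyl conjugacy; since $h'$ need not lie in $G_{\mathrm{ad}}$, your Weyl argument would itself need the ambient linear group, at which point the trace is the cleaner invariant. Your use of $h_{\widetilde\alpha}$ in place of the paper's $\prod_i h_{\alpha_i}$, and your correct observation that no graph part survives, are fine.
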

Some examples of such fields are: any number field, the real numbers $\mathbb{R}$, the $p$-adic fields $\Q_p$, etc. 

Further Theorems \ref{mainthm1} and \ref{mainthm2} provide a characterization of the $S_{\infty}$-property in  twisted Chevalley groups. 
Suppose $\Psi\in \Out(G):=\Aut(G)/\Inn(G)$. Two elements $\alpha, \beta\in \Psi$ are said to be \emph{isogredient} (or similar) if $\beta=i_g\circ \alpha \circ i_{g^{-1}}$ for some $g\in G$, where $i_g(h)=ghg^{-1}$. Clearly, this is an equivalence relation on $\Psi$. Let $S(\Psi)$ denote the number of isogredience classes of $\Psi$. A group $G$ has the \emph{$S_{\infty}$-property} if $S(\Psi)=\infty$ for all $\Psi\in \Out(G)$. (For details see \secref{isogred}. Also, see \cite{ft2015}). Any non-elementary hyperbolic group satisfies the $S_{\infty}$-property (see \cite{ll00}). It follows from the works of Nasybullov that the Chevalley group has the $S_{\infty}$-property under the same condition as above on the field $k$ (although it was implicit there). For example, see \cite{FN}. In this direction, we have the following result: 
\begin{corollary}\label{maincor1}
Let $G'_{\sigma}$ be a twisted Chevalley group corresponding to the irreducible root system $\Phi$ over a field $k$ of $\mathrm{char}\; k=0$. If the transcendence degree of $k$ over $\mathbb{Q}$ is finite or $\Aut(k)$ is periodic, then $G'_{\sigma}$ possesses the $S_{\infty}$-property.
\end{corollary}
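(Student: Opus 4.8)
The strategy is to convert the $S_\infty$-statement into an $R_\infty$-statement for the \emph{adjoint} twisted Chevalley group and then invoke Theorems \ref{mainthm1} and \ref{mainthm2}. Fix an arbitrary class $\Psi\in\Out(G'_\sigma)$ and pick a representative $\varphi\in\Aut(G'_\sigma)$, so that $\Psi=\varphi\,\Inn(G'_\sigma)$. Since $Z(G'_\sigma)$ is a characteristic subgroup, $\varphi(Z(G'_\sigma))=Z(G'_\sigma)$ and $\varphi$ descends to an automorphism $\overline{\varphi}$ of the quotient $\overline{G}:=G'_\sigma/Z(G'_\sigma)$.

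First I would set up the standard ``isogredience $=$ twisted conjugacy modulo the center'' dictionary. Writing $i_x$ for conjugation by $x$, the identity $\varphi\circ i_x=i_{\varphi(x)}\circ\varphi$ shows that $x\mapsto i_x\circ\varphi$ is a surjection $G'_\sigma\to\Psi$ whose fibers are exactly the cosets of $Z(G'_\sigma)$, so it induces a bijection $\overline{G}\xrightarrow{\ \sim\ }\Psi$. The computation
\[
i_g\circ(i_x\circ\varphi)\circ i_{g^{-1}}=i_{gx\varphi(g)^{-1}}\circ\varphi
\]
then shows that $i_x\circ\varphi$ and $i_y\circ\varphi$ are isogredient precisely when the images of $x$ and $y$ in $\overline{G}$ are $\overline{\varphi}$-twisted conjugate. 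Hence $S(\Psi)=R(\overline{\varphi})$, the Reidemeister number of the induced automorphism $\overline{\varphi}$ acting on $\overline{G}$.

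Next I would identify $\overline{G}$. By the description of twisted Chevalley groups recalled in \secref{tchev}, $G'_\sigma=\widetilde{G}'_\sigma/Z$ with $Z\le Z(\widetilde{G}'_\sigma)$ and $Z(G'_\sigma)=Z(\widetilde{G}'_\sigma)/Z$, so that $\overline{G}=\widetilde{G}'_\sigma/Z(\widetilde{G}'_\sigma)$ is exactly the simple adjoint twisted Chevalley group attached to the same irreducible root system $\Phi$ over the same field $k$. In particular $\overline{G}$ is itself a twisted Chevalley group of the type covered by Theorems \ref{mainthm1} and \ref{mainthm2}, and $\overline{\varphi}\in\Aut(\overline{G})$.

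To conclude, I would apply \thmref{mainthm1} (when $k$ has finite transcendence degree over $\Q$) or \thmref{mainthm2} (when $\Aut(k)$ is periodic) to the pair $(\overline{G},\overline{\varphi})$; either one gives $R(\overline{\varphi})=\infty$, hence $S(\Psi)=\infty$, and since $\Psi\in\Out(G'_\sigma)$ was arbitrary, $G'_\sigma$ has the $S_\infty$-property. I do not expect a real obstacle here: the one place that needs genuine care is the identification $Z(G'_\sigma)=Z(\widetilde{G}'_\sigma)/Z$, which guarantees that $\overline{G}$ really is the adjoint twisted Chevalley group (and hence that Theorems \ref{mainthm1} and \ref{mainthm2} apply to it); the bijection $\overline{G}\cong\Psi$ and the isogredience computation are elementary, and all the substance is already contained in the two main theorems.
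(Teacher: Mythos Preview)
Your proof is correct and follows essentially the same route as the paper: reduce to the adjoint quotient $G'_\sigma/Z(G'_\sigma)$ and invoke Theorems~\ref{mainthm1} and~\ref{mainthm2}. The only cosmetic difference is that the paper packages your direct identity $S(\Psi)=R(\overline{\varphi})$ into two citable lemmas (\lemref{sinfty} for the centerless adjoint group, then \lemref{qtogs} to lift the $S_\infty$-property along the short exact sequence~\eqref{qtog}), whereas you establish the isogredience--twisted-conjugacy dictionary in one step.
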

The problem of determining when the identity class $[e]_{\varphi}$ is a subgroup of $G$ was initiated by Bardakov et al. in \cite{bnn2013}. Also, see \cite{gn2019} for some recent works. In \cite[Theorems 3, 4]{NaT}, Nasybullov proved that $[e]_{\varphi}$ is a subgroup of a Chevalley group $G$ over $k$ if and only if $\varphi$ is a central automorphism, provided that the field $k$ over $\Q$ has finite transcendence degree or $\Aut(k)$ is periodic. We extend this result to the twisted Chevalley groups and prove the following: 
\begin{corollary}\label{maincor2}
Let $G'_{\sigma}$ be a twisted Chevalley group corresponding to the irreducible root system $\Phi$ over a field $k$ of $\mathrm{char}\; k=0$. Suppose that the transcendence degree of $k$ over $\mathbb{Q}$ is finite or $\Aut(k)$ is periodic. 
Then the $\varphi$-twisted conjugacy class $[e]_{\varphi}$ is a subgroup of $G'_{\sigma}$ if and only if $\varphi$ is a central automorphism of $G'_{\sigma}$.
\end{corollary}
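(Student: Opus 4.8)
The plan is to deduce this from Theorems \ref{mainthm1} and \ref{mainthm2} together with the structure of automorphisms of the simple twisted Chevalley group $G'_\sigma$. One direction is immediate: if $\varphi$ is a central automorphism of $G'_\sigma$, meaning $\varphi(g) = g\cdot z(g)$ for a homomorphism $z\colon G'_\sigma \to Z(G'_\sigma)$, then a direct computation shows $[e]_\varphi = \{g\varphi(g)^{-1} : g\in G'_\sigma\} = \{g\,z(g)^{-1}g^{-1} : g\in G'_\sigma\}$; since $z(g)$ is central, this is $\{z(g)^{-1} : g\in G'_\sigma\}$, which is a subgroup of $Z(G'_\sigma)$ (indeed equal to the image of the homomorphism $g\mapsto z(g)^{-1}$). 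So $[e]_\varphi$ is a subgroup. Actually, since $G'_\sigma$ as considered here is the adjoint (hence centerless, centreless simple) case — or more generally one must check $Z(G'_\sigma)$ is trivial or handle it — in the adjoint simple case a central automorphism is the identity and $[e]_\varphi = \{e\}$ trivially; for the general quotient $\widetilde G'_\sigma/Z$ one uses the homomorphism description above.

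For the converse, suppose $[e]_\varphi$ is a subgroup of $G'_\sigma$. The key observation is the following elementary fact, essentially due to Bardakov et al.\ and used by Nasybullov: if $[e]_\varphi$ is a subgroup, then it is a \emph{normal} subgroup of $G'_\sigma$ (since $[e]_\varphi$ is invariant under the twisted action, and being a subgroup forces it to be a union of twisted conjugacy classes that is closed under the group operation, which forces normality), and moreover $G'_\sigma/[e]_\varphi$ is abelian. Indeed, for any $x,y\in G'_\sigma$ one checks that $xyx^{-1}y^{-1}$ lies in $[e]_\varphi$ whenever $[e]_\varphi$ is a subgroup, by writing commutators in terms of twisted-conjugacy expressions. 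Hence the commutator subgroup $[G'_\sigma, G'_\sigma]$ is contained in $[e]_\varphi$. But $G'_\sigma$ is (quasi)simple — in the adjoint case it is simple and equals its own commutator subgroup — so $[G'_\sigma, G'_\sigma] = G'_\sigma$ (or has index dividing $|Z|$), forcing $[e]_\varphi = G'_\sigma$, i.e.\ $R(\varphi) = 1$.

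Now invoke Theorems \ref{mainthm1} and \ref{mainthm2}: under the hypothesis that $k$ has finite transcendence degree over $\Q$ or $\Aut(k)$ is periodic, $G'_\sigma$ has the $R_\infty$-property, so $R(\varphi) = \infty$ for \emph{every} automorphism $\varphi$. This contradicts $R(\varphi) = 1$ unless the reduction above was not quite forcing $[e]_\varphi = G'_\sigma$ — so one must be careful in the quasisimple (universal) case where $G'_\sigma = \widetilde G'_\sigma/Z$ has a nontrivial center. There the correct statement is that $[e]_\varphi \supseteq [G'_\sigma, G'_\sigma]$, and since $G'_\sigma/[G'_\sigma,G'_\sigma]$ is a finite abelian group (a quotient of $Z(\widetilde G'_\sigma)$), $[e]_\varphi$ has finite index, again contradicting $R(\varphi)=\infty$. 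Therefore the assumption that $[e]_\varphi$ is a subbgroup but $\varphi$ is not central is untenable; one then argues that the only way to reconcile "$[e]_\varphi$ a subgroup" with the $R_\infty$-property is the degenerate reading where $\varphi$ acts centrally — more precisely, following \cite[Theorems 3, 4]{NaT}, one shows directly that $[e]_\varphi$ being a subgroup implies $\varphi$ differs from an inner automorphism by a central one, and then a separate (short) argument rules out the non-central inner part using $R_\infty$. The main obstacle I expect is precisely this bookkeeping around the center in the non-adjoint case: one must carefully track how central subgroups $Z \le Z(\widetilde G'_\sigma)$ interact with the twisted conjugacy relation and with the commutator computations, and verify that the abelianization $G'_\sigma/[G'_\sigma,G'_\sigma]$ is genuinely finite (or trivial) for each relevant type $A_l, D_l, E_6$ so that finite index of $[e]_\varphi$ contradicts $R_\infty$. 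The field-theoretic input is entirely black-boxed via the two main theorems; the real work is the group-theoretic lemma that "$[e]_\varphi \le G'_\sigma$ a subgroup $\Rightarrow$ $[e]_\varphi \supseteq [G'_\sigma, G'_\sigma]$ and $\varphi$ is central modulo inner," which I would extract by adapting Nasybullov's argument in \cite{NaT} to the twisted setting.
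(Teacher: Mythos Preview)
Your forward direction is fine and matches the paper (\lemref{lemma49}). The converse, however, has a genuine gap: the claim that ``$[e]_\varphi$ a subgroup $\Rightarrow G'_\sigma/[e]_\varphi$ is abelian'' (equivalently $[G'_\sigma,G'_\sigma]\subseteq [e]_\varphi$) is false. Take $\varphi=\mathrm{Id}$: then $[e]_\varphi=\{e\}$ is a subgroup, yet $G'_\sigma/\{e\}=G'_\sigma$ is non-abelian. Your argument, as written, would then prove that $[e]_\varphi$ is \emph{never} a subgroup, which is absurd. You seem to sense this (``unless the reduction above was not quite forcing\ldots''), but you never repair the step; the commutator computation you allude to does not exist.

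The paper's route is cleaner and avoids this entirely. It first passes to the adjoint quotient via \lemref{lemma51}: if $[e]_\varphi$ is a subgroup of $G'_\sigma$, then $[\bar e]_{\bar\varphi}$ is a subgroup of the simple group $G'_\sigma/Z(G'_\sigma)$, hence normal (\lemref{lemma49}). Simplicity forces $[\bar e]_{\bar\varphi}$ to be trivial or everything; the latter gives $R(\bar\varphi)=1$, contradicting the $R_\infty$-property of the adjoint group (Theorems \ref{mainthm1}, \ref{mainthm2}). So $[\bar e]_{\bar\varphi}$ is trivial, meaning $\bar g\,\bar\varphi(\bar g)^{-1}=\bar e$ for all $g$, i.e.\ $\bar\varphi=\mathrm{Id}$, i.e.\ $\varphi$ is central. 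No commutator bookkeeping is needed.

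Your overall strategy is salvageable if you replace the false abelianization claim by the correct structural input: in the quasisimple group $G'_\sigma$ every normal subgroup is either contained in $Z(G'_\sigma)$ or equals $G'_\sigma$. Then normality of $[e]_\varphi$ plus $R_\infty$ forces $[e]_\varphi\subseteq Z(G'_\sigma)$, which says precisely that $g^{-1}\varphi(g)\in Z(G'_\sigma)$ for all $g$, i.e.\ $\varphi$ is central. This is the paper's argument without the explicit passage to the quotient.
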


\subsubsection*{Structure of the paper}
This paper is the study of the $R_{\infty}$ and $S_{\infty}$-property of the twisted Chevalley groups. 
In \secref{preliminaries}, we cover the preliminaries.
 \secref{mainsection} contains the proof of \thmref{mainthm1} and \thmref{mainthm2}. The final section is devoted to the proof of \corref{maincor1} and \corref{maincor2}. 

\section{Preliminaries}\label{preliminaries}
In this section, we fix some notations and terminologies, and recall some results which will be used throughout this paper. Most of the notions are from Carter \cite{ca}. Let $k$ be a field of characteristic zero.
\subsection{Chevalley groups} \label{section21}
We refer the interested reader to the original work of Chevalley \cite{chevalley55}. Let $\mathcal{L}$ be a complex simple Lie algebra and $\mathcal{H}$ be a Cartan subalgebra of $\mathcal{L}$. Consider the adjoint representation  
$\mathrm{ad} : \mathcal{L} \rightarrow \mathfrak{gl}(\mathcal{L})$
given by $\mathrm{ad}X(Y)=[X,Y]$.
Thus we have the Cartan decomposition 
	$$\mathcal{L}=\mathcal{H}\bigoplus \displaystyle\sum_{\alpha \in \Phi}\mathcal{L}_{\alpha},$$ 
	where $\mathcal{L}_{\alpha}=\{X\in \mathcal{L}\mid \mathrm{ad}H(X)=\alpha(H)X,\text{ for all } H \in \mathcal{H}\}$ 
	are root spaces of $\mathcal{L}$ and $\Phi$ is an irreducible root system with respect to $\mathcal{H}$. Also, we fix $\Delta$ and $\Phi^+$ to be a simple (or fundamental) root system and a positive root system, respectively. Therefore $\dim_\C(\mathcal{L})=|\Phi|+|\Delta|$. 
Chevalley proved that there exists a basis of $\mathcal{L}$ such that all the structure constants, which define $\mathcal{L}$ as a Lie algebra, are integers (for example, see \cite[Theorem 4.2.1]{ca}). This is a key theorem to define Chevalley groups. Let $h_{\alpha} \in \mathcal{H}$ 
be the co-root corresponding to the root $\alpha$. Then, for each root $\alpha \in \Phi$,
	an element $e_{\alpha}$ can be chosen in $\mathcal{L}_{\alpha}$ such that 
	$[ e_{\alpha}, e_{-\alpha} ]=h_{\alpha}$. The elements 
\begin{align}\label{chevalleybasis}
	\{e_{\alpha}, \alpha \in \Phi;\; h_{\delta},\delta \in \Delta\}
\end{align}
form a basis for $\mathcal{L}$, called a \textbf{Chevalley basis}.
	
The map $\mathrm{ad}e_{\alpha}$ is a nilpotent linear map on 
$\mathcal{L}$. For $t \in \mathbb{C}$, the map  $\mathrm{ad}(te_{\alpha})=
t(\mathrm{ad}e_{\alpha})$ is also nilpotent.
Thus $\mathrm{exp}(t(\mathrm{ad}e_{\alpha}))$ is an automorphism of 
$\mathcal{L}$. 
We denote by $\mathcal{L}_{\mathbb{Z}}$ the subset of $\mathcal{L}$ of 
all $\mathbb{Z}$-linear combinations of the Chevalley basis elements of $\mathcal{L}$.
Thus $\mathcal{L}_{\mathbb{Z}}$ is a Lie algebra over $\mathbb{Z}$. We define $\mathcal{L}_{k}:=\mathcal{L}_{\mathbb{Z}}\otimes_{\mathbb{Z}} k$.
Then $\mathcal{L}_{k}$ is a Lie algebra 
over $k$ with respect to the natural Lie multiplication.

So we are in a position to define the 
Chevalley groups of adjoint type or the elementary Chevalley groups over any field $k$.
The \textit{adjoint Chevalley group} of type $\Phi$ over the field $k$, denoted by 
$G(\Phi, k)$ or simply $G$, is defined to be the subgroup of automorphism group of the Lie 
algebra $\mathcal{L}_k$ generated by $x_\alpha(t):=\mathrm{exp}(t(\mathrm{ad}e_{\alpha}))$ for all 
$\alpha \in \Phi, t \in k$. In fact, the group $G$ over $k$ 
is determined up to isomorphism by the simple Lie algebra 
$\mathcal{L}$ over $\mathbb{C}$ and the field $k$. Following the notations as in \cite[Theorem 12.1.1]{ca} (assume $\Phi\neq A_1$), let $\widetilde{G}$ be the abstract group generated by $\widetilde{x}_{\alpha}(t)$ ($\alpha\in \Phi, t\in k$) satisfying the following relations:
\begin{enumerate}[leftmargin=*]
\item $\widetilde{x}_{\alpha}(t)\widetilde{x}_{\alpha}(s)=\widetilde{x}_{\alpha}(t+s)$\; ($t,s\in k$),
\item $[\widetilde{x}_{\beta}(s),\widetilde{x}_{\alpha}(t)]
=\displaystyle\prod_{\substack{i,j>0\\i\alpha+j\beta\in \Phi}}\widetilde{x}_{i\alpha+j\beta}(C_{ij,\alpha\beta}(-t)^is^j)$\; ($\alpha,\beta\in \Phi$ and $t,s\in k$),
\item $\widetilde{h}_{\alpha}(t)\widetilde{h}_{\alpha}(s)=\widetilde{h}_{\alpha}(ts)$ \;($t,s\in k^{\times}$),
\end{enumerate}
where $\widetilde{h}_{\alpha}(t)=\widetilde{n}_{\alpha}(t)\widetilde{n}_{\alpha}(-1)$, 
 $\widetilde{n}_{\alpha}(t)=\widetilde{x}_{\alpha}(t)\widetilde{x}_{-\alpha}(-t^{-1})\widetilde{x}_{\alpha}(t)$ and $C_{ij,\alpha\beta}$ are certain integers. Then
 $\widetilde{G}/Z(\widetilde{G})\cong G$ (the adjoint Chevalley group). 
The group $\widetilde{G}$ is called  \textit{universal Chevalley group} of type $\Phi$ over $k$. For detail constructions of $\widetilde{G}$ see \cite[Chapter 3]{St2}.
If $Z$ is any subgroup of $Z(\widetilde{G})$, then the factor group $\widetilde{G}/Z$ is called \textit{Chevalley group}. For $\alpha\in \Phi, t\in k^{\times}$ set
\begin{align}
n_{\alpha}(t):&=x_{\alpha}(t)x_{-\alpha}(-t^{-1})x_{\alpha}(t) \\
h_{\alpha}(t):&=n_{\alpha}(t)n_{\alpha}(-1)\label{torus}.
\end{align}
\begin{notation}\label{uvhn}
	Let $G$ be an adjoint Chevalley group. For each $\alpha\in \Phi$, let $w_{\alpha}$ be the reflection in the hyperplane orthogonal to $\alpha$. The following notations shall be used throughout this paper.
\begin{align*}
U&=\langle x_{\alpha}(t)\mid \alpha \in \Phi^{+}, t\in k\rangle, \\
V&=\langle x_{\alpha}(t)\mid \alpha \in \Phi^{-}, t\in k\rangle, \\
H&=\langle h_{\alpha}(t)\mid \alpha \in \Delta, t\in k^{\times}\rangle, \\
N&=\langle H, n_{\alpha}(1)\mid \alpha \in \Phi\rangle,\\
W&=\langle w_{\alpha} \mid \alpha\in \Delta \rangle. 
\end{align*}
\end{notation}
Observe that $n_{\alpha}(1)Hn_{\alpha}(1)^{-1}=H$, i.e., $H$ is a normal subgroup of $N$ and $N/H\cong W$ given by $n_{\alpha}(1)H\mapsto w_\alpha$. 
\begin{example}\label{example1}
Let $\mathcal{L}_k=\mathfrak{sl}_2(k)=\left\{\begin{pmatrix}
a&b\\c&d\end{pmatrix}\in \mathrm{M}_2(k)\mid a+d=0\right\}$ be the simple Lie algebra of type $A_1$. A Chevalley basis for $\mathfrak{sl}_2(k)$ is $$\left\{e_\alpha=\begin{pmatrix}0&1\\0&0\end{pmatrix}, e_{-\alpha}=\begin{pmatrix}0&0\\1&0\end{pmatrix}; h_\alpha=\begin{pmatrix}1&0\\0&-1\end{pmatrix}\right\}.$$
\end{example}
Now $\widetilde{x}_{\alpha}(t):=\exp(te_{\alpha})=\begin{pmatrix}1&t\\0&1\end{pmatrix}$ and $\widetilde{x}_{-\alpha}(t):=\exp(te_{-\alpha})=\begin{pmatrix}1&0\\t&1\end{pmatrix}$. Then
\[
\widetilde{n}_{\alpha}(t):=\widetilde{x}_{\alpha}(t)\widetilde{x}_{-\alpha}(-t^{-1})\widetilde{x}_{\alpha}(t)=\begin{pmatrix}0&t\\-t^{-1}&0\end{pmatrix},\;
\widetilde{h}_{\alpha}(t):=\widetilde{n}_{\alpha}(t)\widetilde{n}_{\alpha}(-1)=\begin{pmatrix}t&0\\0&t^{-1}\end{pmatrix}.\]
Note that $\SL_2(k)=\langle\widetilde{x}_{\alpha}(t),\widetilde{x}_{-\alpha}(t)\mid t\in k\rangle$, and the adjoint Chevalley group 
$G(A_1, k)=\langle x_{\alpha}(t), x_{-\alpha}(t)\mid t\in k\rangle$.  Thus there is a surjective homomorphism $\eta: \SL_2(k)\rightarrow G(A_1,k)$ given by $\eta(\widetilde{x}_{\alpha}(t))=x_{\alpha}(t),\; \eta(\widetilde{x}_{-\alpha}(t))=x_{-\alpha}(t),\; \eta(\widetilde{n}_{\alpha}(t))=n_{\alpha}(t)$ and $\eta(\widetilde{h}_{\alpha}(t))=h_{\alpha}(t)$ with $\ker(\eta)=\{\pm I_2\}$. Hence $G(A_1,k)\cong \mathrm{PSL}_2(k)$. 
\begin{definition}\label{hhat}
	Let $R:=\mathbb{Z}\Phi$ be the root lattice and $\chi:R \rightarrow k^{\times}$ be a $k$-character (i.e., a group homomorphism from the additive group $R$ to the multiplicative group $k^{\times}$). Then $\chi$ gives rise to an automorphism, denoted by $h(\chi)$, of the Lie algebra $\mathcal{L}_k$ given by $h(\chi)(e_\alpha)=\chi(\alpha)e_{\alpha} \;(\alpha\in \Phi)$ and $h(\chi)(h_{\delta})=h_{\delta} \;(\delta\in \Delta)$. Here $e_\alpha, h_{\delta}$ as in equation \eqref{chevalleybasis}. Define  $\widehat{H}:=\{h(\chi)\mid \chi:\mathbb{Z}\Phi \rightarrow k^{\times}\}$.
\end{definition}
For $\alpha\in \Delta, t\in k^{\times}$ the element $h_{\alpha}(t)$ corresponds to the character $\chi:=\chi_{\alpha, t}$ given by $\chi_{\alpha, t}(\beta)=t^{\frac{2(\alpha, \beta)}{(\alpha, \alpha)}}$, where $\frac{2(\alpha, \beta)}{(\alpha, \alpha)}\in \mathbb{Z}$ are the Cartan integers (for all $\beta\in \Phi$). Therefore $H\leq \widehat{H}$.
In other words, $H$ can be viewed as a subgroup of $G$ consisting of all automorphisms $h(\chi)$ of $\mathcal{L}_k$ for which a $k$-character $\chi$ of $\mathbb Z\Phi$ can be extended to a $k$-character of $\mathbb{Z}\langle q_1, q_2, \ldots, q_l\rangle $ (\textit{weight lattice}), where $q_1, \ldots, q_l$ are fundamental weights. Note that $\widehat{H}\leq N_{\Aut(\mathcal{L}_k)}(G)$. (For details, see \cite[Chapter 7]{ca}).
\begin{example}
For $\SL_2(k)$, we have $H=\{\diag(t,t^{-1})\mid t\in k^{\times}\}\cong k^{\times}$. In this special case, let $\Delta=\{\alpha\}$ and $\Phi=\{\alpha, -\alpha\}$ (as rank is $1$). Therefore the root lattice is  $R=\mathbb{Z}\langle\alpha\rangle\cong \mathbb{Z}$ and the weight lattice is (say) $P=\mathbb{Z}\langle\alpha/2\rangle\cong \mathbb{Z}$. 
Now let $\chi$ be a $k$-character of $R$ given by $\chi(\alpha)=t$ ($t\in k^{\times}$). Then $\chi(\alpha/2)^2=\chi(\alpha/2+\alpha/2)=\chi(\alpha)=t$. Hence, if $k=\Q$ (resp. $k=k_0(T), T$ transcendental over $k_0\subset k$) then $H\neq\widehat{H}$ since not all $\Q$-character $\chi$ (resp. $k_0(T)$-character) can be extended to its weight lattice $P$ as $\Q$-character (resp. $k_0(T)$-character). But if $k=\C$ then   $H=\widehat{H}$.
\end{example}
\subsection{Twisted Chevalley groups}\label{tchev}
Every semisimple linear group of classical type can be thought of as a Chevalley group or as  a twisted Chevalley group. In general, the classical simple groups are the special linear, symplectic, special orthogonal and special unitary groups corresponding to forms whose Witt index is sufficiently large. The twisted groups were discovered independently by Steinberg 
(cf. \cite{steinberg59}) and Tits (cf. \cite{tits}). For our exposition, we will follow Steinberg's approach. 
 The twisted Chevalley groups can be obtained as certain subgroups of the Chevalley groups $G=G(\Phi, k)\leq\Aut(\mathcal{L}_k)$, where $\mathcal{L}$ is a Lie algebra over $\mathbb{C}$ and $\mathcal{L}_k$ is the corresponding Lie algebra over $k$.
We refer the reader to \cite{ca} and \cite{St2} for details.

\begin{definition}[Graph automorphism]
A symmetry of the Dynkin diagram induces this type of automorphisms. A symmetry of the Dynkin diagram of $\mathcal{L}$ is a permutation $\rho$ of the nodes of the diagram such that the number of edges joining nodes $i, j$ is the same as the number of edges joining nodes $\rho(i), \rho(j)$ for all $i\neq j$, i.e., if $n_{ij}$ is equal to the number of edges joining the nodes corresponding to $\alpha_i, \alpha_j \in \Delta$ (simple roots), then $n_{ij}=n_{\rho(i)\rho(j)}$. Any graph automorphism will be denoted by $\overline{\rho}$. 
\end{definition}
\begin{definition}[Field automorphism]
Let $f$ be an automorphism of the field $k$, then the map 
\[\widetilde{f}:G\rightarrow G \text{ defined by }\widetilde{f}(x_{\alpha}(t))=x_{\alpha}(f(t)),\;  \alpha \in \Phi, t\in k\] can be extended to an automorphism of $G$. The automorphisms obtained in this way are called field automorphisms. We shall abuse the notation slightly and denote the field automorphism of $G$ by $f$ itself.
\end{definition}
Since we are considering only  $\mathrm{char}\;k=0$, then non-trivial graph automorphisms exist only for the root systems of types 
$A_l \, (l\geq 2)$, $D_l\, (l\geq 4), E_6$ and $D_4$, and the order of the graph automorphisms are $2, 2, 2$ and $3$, respectively.	
Let $\rho$ be a non-trivial symmetry of the Dynkin diagram of $\mathcal{L}$ then the order of $\rho$ is $2$ or $3$. For this article, by a Chevalley group of type $\Phi$ we always mean one of the following types: $A_l \, (l\geq 2)$, $D_l\, (l\geq 4)$ and $E_6$.

Let $G$ be the Chevalley group of type $\Phi$.   
Then there is a graph automorphism $\overline{\rho}$ of $G$ such that 
\[\overline{\rho}(x_{\alpha}(t))=x_{\rho(\alpha)}(t),\] for all $\alpha \in \Delta$ and $t\in k$. Let $f: G\rightarrow G$ be a field  automorphism, i.e., $f(x_{\alpha}(t))=x_{\alpha}(f(t))$. Then 
\[(\overline{\rho} f)(x_{\alpha}(t))=\overline{\rho}(x_{\alpha}(f(t)))=x_{\rho(\alpha)}(f(t))=f(x_{\rho(\alpha)}(t))=f(\overline{\rho}(x_{\alpha}(t)))=(f \overline{\rho})(x_{\alpha}(t))\]
for all $\alpha \in \Delta$ and $t\in k$. Therefore $f\circ \overline{\rho}=\overline{\rho}\circ f$, as $G$ is generated by $x_{\alpha}(t)$. 
Let $n$ be the order of $\rho$, then 
\[\overline{\rho}^n\cdot x_{\alpha}(t)=
x_{\rho^n(\alpha)}(t)=x_{\alpha}(t)\] 
for all $\alpha \in \Delta$. Therefore $\overline{\rho}^n=\mathrm{Id}$.
Hence for the automorphism $\sigma :=\overline{\rho}\circ f: G\rightarrow G$, we have  $\sigma^n=\overline{\rho}^nf^n=f^n$. If $f$ is any non-trivial field automorphism such that $f^n=\mathrm{Id}$, then $\sigma^n=\mathrm{Id}$. With the terminologies as in Notation \ref{uvhn}, we have the following important result which will be used to describe the twisted Chevalley group. 
\begin{lemma}\cite[Proposition 13.4.1]{ca}
	Let $G$ be a Chevalley group of type $\Phi$ over a field $k$ of $\mathrm{char}\;k=0$, whose Dynkin diagram has a non-trivial symmetry $\rho$. Let $\overline{\rho}$ be the graph automorphism corresponding to $\rho$ and $f$ be a non-trivial field automorphism chosen such that $\sigma=\overline{\rho} f$ satisfies $\sigma^n=\mathrm{Id}$, i.e., $f$ is chosen so that $f^n=\mathrm{Id}$, where $n$ is the order of $\rho$ which is either $2$ or $3$. Then we have $\sigma(U)=U, \sigma(V)=V, \sigma(H)=H, \sigma(N)=N$
and $\sigma: N/H\cong W \rightarrow W$ is given by $\sigma(w_{\alpha})=w_{\rho({\alpha})}$ for all $\alpha \in \Delta$. Here $w_{\alpha}$ denotes the reflection in the hyperplane orthogonal to $\alpha$.
\end{lemma}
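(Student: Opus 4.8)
The plan is to verify the four setwise identities $\sigma(U)=U$, $\sigma(V)=V$, $\sigma(H)=H$, $\sigma(N)=N$ separately, directly from the action of $\overline{\rho}$ and $f$ on the Chevalley generators $x_{\alpha}(t)$, and then to read off the induced map on $N/H$ by a short coset computation; throughout I would use $\sigma=\overline{\rho}\circ f=f\circ\overline{\rho}$, already established above.

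The one nontrivial input I would invoke is the action of $\overline{\rho}$ on \emph{all} root subgroups. Since $\rho$ permutes $\Delta$, it extends to an automorphism of the root system $\Phi$ preserving $\Phi^{+}$ and $\Phi^{-}$ (for the relevant types $\Phi$ is simply laced, so this extension is even an isometry), and by the construction of graph automorphisms (Carter \cite{ca}), $\overline{\rho}$ is the automorphism of $G$ with $\overline{\rho}(x_{\alpha}(t))=x_{\rho(\alpha)}(\gamma_{\alpha}t)$ for all $\alpha\in\Phi$ and $t\in k$, where $\gamma_{\alpha}\in\{\pm1\}$, $\gamma_{\alpha}=1$ for $\alpha\in\pm\Delta$, and $\gamma_{-\alpha}=\gamma_{\alpha}$. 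Substituting this into $n_{\alpha}(t)=x_{\alpha}(t)x_{-\alpha}(-t^{-1})x_{\alpha}(t)$ gives $\overline{\rho}(n_{\alpha}(t))=n_{\rho(\alpha)}(\gamma_{\alpha}t)$, and then, using the relation $h_{\alpha}(s)h_{\alpha}(s')=h_{\alpha}(ss')$ (so that $h_{\alpha}(1)=\mathrm{Id}$, $n_{\alpha}(-1)=n_{\alpha}(1)^{-1}$, and $n_{\alpha}(1)^{2}=h_{\alpha}(-1)$), a routine check in each of the two cases $\gamma_{\alpha}=\pm1$ yields $\overline{\rho}(h_{\alpha}(t))=h_{\rho(\alpha)}(t)$.

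Granted these formulas, the rest is bookkeeping. Because $\rho(\Phi^{+})=\Phi^{+}$, $\overline{\rho}$ carries the generators of $U$ into $U$, and applying the same reasoning to $\rho^{-1}$ gives $\overline{\rho}(U)=U$; likewise $\overline{\rho}(V)=V$. The field automorphism fixes every root subgroup setwise, $f(x_{\alpha}(t))=x_{\alpha}(f(t))$, so $f(U)=U$ and $f(V)=V$; hence $\sigma(U)=U$ and $\sigma(V)=V$. For $H$: $\overline{\rho}(h_{\alpha}(t))=h_{\rho(\alpha)}(t)\in H$ and $f(h_{\alpha}(t))=h_{\alpha}(f(t))\in H$, so $\overline{\rho}(H)=H=f(H)$ and $\sigma(H)=H$. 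For $N$: $\overline{\rho}(n_{\alpha}(1))=n_{\rho(\alpha)}(\pm1)\in N$ and $f(n_{\alpha}(1))=n_{\alpha}(1)$, so with $\overline{\rho}(H)=H$ we get $\overline{\rho}(N)=N=f(N)$ and $\sigma(N)=N$. Finally, since $\sigma(H)=H$, the automorphism $\sigma$ descends to $N/H\cong W$ (via $n_{\alpha}(1)H\mapsto w_{\alpha}$); for $\alpha\in\Delta$ one has $\rho(\alpha)\in\Delta$ and $\gamma_{\alpha}=1$, so $\sigma(n_{\alpha}(1))H=\overline{\rho}(n_{\alpha}(1))H=n_{\rho(\alpha)}(1)H$, which corresponds to $w_{\rho(\alpha)}$; thus $\sigma(w_{\alpha})=w_{\rho(\alpha)}$ for all $\alpha\in\Delta$.

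The step I expect to be the real obstacle is the one I merely quote: showing that the simple-root prescription $x_{\alpha}(t)\mapsto x_{\rho(\alpha)}(t)$ propagates to a well-defined automorphism acting as $x_{\alpha}(t)\mapsto x_{\rho(\alpha)}(\gamma_{\alpha}t)$ on \emph{every} root subgroup, with signs $\gamma_{\alpha}=\pm1$ satisfying $\gamma_{-\alpha}=\gamma_{\alpha}$. This is the heart of the theory of graph automorphisms (one defines $\rho$ on a Chevalley basis by $e_{\alpha}\mapsto\gamma_{\alpha}e_{\rho(\alpha)}$ and checks compatibility with the integral structure constants), and I would take it from Carter \cite{ca}; everything else above is a direct manipulation of the defining relations of the Chevalley group together with the fact that $\rho$ preserves positivity of roots.
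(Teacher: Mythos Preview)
The paper does not supply its own proof of this lemma; it simply quotes the result from Carter \cite[Proposition 13.4.1]{ca}. Your proposal, by contrast, gives a correct direct verification, and it is essentially the argument one finds in Carter: once one knows that the graph automorphism acts on \emph{all} root subgroups by $\overline{\rho}(x_{\alpha}(t))=x_{\rho(\alpha)}(\gamma_{\alpha}t)$ with $\gamma_{\alpha}\in\{\pm1\}$ and $\gamma_{-\alpha}=\gamma_{\alpha}$ (which you rightly flag as the only substantive input and defer to Carter), the invariance of $U,V,H,N$ and the induced action on $W$ follow by the generator computations you carry out. Your derivations of $\overline{\rho}(n_{\alpha}(t))=n_{\rho(\alpha)}(\gamma_{\alpha}t)$ and $\overline{\rho}(h_{\alpha}(t))=h_{\rho(\alpha)}(t)$ are correct, and the case split on $\gamma_{\alpha}=\pm1$ for the latter is handled properly via $n_{\alpha}(1)=n_{\alpha}(-1)^{-1}$ and $n_{\alpha}(-1)^{2}=h_{\alpha}(-1)$. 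So there is nothing to compare: you have supplied what the paper omits, and your account matches the standard one.
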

Now we are in a position to define the twisted Chevalley groups as a certain subgroups of the Chevalley groups which are fixed elementwise by the automorphisms $\sigma=\overline{\rho}f$, where $\overline{\rho}$ is a graph automorphism and $f$ is a field automorphism of $G$.
\begin{definition} The following notations will be used henceforth. 
\begin{enumerate}
		\item $U':=\{u\in U\mid \sigma(u)=u\}$.
		\item $V':=\{v\in V\mid \sigma(v)=v\}$.
		\item $G'_{\sigma}:=\langle U', V'\rangle\leq G$.
		\item $H':=H\cap G'_{\sigma}$.
		\item $N':=N\cap G'_{\sigma}$.
	\end{enumerate}
\end{definition}
The group $G'_{\sigma}$ is called the \textbf{twisted Chevalley group of adjoint type} with respect to the automorphism $\sigma$. Similarly, we can define $\widehat{H'}:=\widehat{H}\cap N_{\Aut(\mathcal{L})}(G'_{\sigma})$, which contains $H'$ and normalizes $G'_{\sigma}$ (cf. \defref{hhat}). This will be useful to describe the diagonal automorphisms of the twisted Chevalley group in the next section. In this context, we recall an important result by Steinberg, which says that all the twisted Chevalley groups of adjoint type are simple (with few exceptions), 
see \cite[p. 884, Theorem 8.1]{steinberg59}. 
Also, we can define the universal twisted Chevalley group $\widetilde{G}'_{\sigma}$. Generally twisted Chevalley groups are of the form $\widetilde{G}'_{\sigma}/Z$ (also denoted by $G'_{\sigma}$), where $Z\leq Z(\widetilde{G}'_{\sigma})$ is a central subgroup (cf. \secref{section21}). Then we have the following short exact sequence of groups
\begin{align}\label{qtog}
\xymatrix{1\ar[r]&Z(G'_{\sigma})\ar[r]&G'_{\sigma}\ar[r]&G'_{\sigma}/Z(G'_{\sigma})\ar[r]&1}
\end{align}
 where $G'_{\sigma}/Z(G'_{\sigma})$ is the twisted Chevalley group of adjoint type. 
\begin{remark}\label{center}
It follows from \cite[p. 29, Lemma 28 (d)]{St2} that the center $Z(\widetilde{G})$ of the universal Chevalley group $\widetilde{G}$ is finite. Also, observe that the center of the twisted Chevalley group of universal type is  $Z(\widetilde{G}'_{\sigma})=Z(\widetilde{G})^{\sigma}=\{g\in Z(\widetilde{G})\mid \sigma(g)=g\}$ (see \cite[p. 108, Exercise]{St2}). Thus the center $Z(G'_{\sigma})$ of the twisted Chevalley group is finite. 
\end{remark}
\begin{example}
\begin{enumerate}\addtolength{\itemindent}{-6mm}
\item\cite[p. 882, Section 6]{steinberg59} Let $\mathcal{L}=A_l\; (l\geq 2)$. Then the adjoint Chevalley group is $G\cong \mathrm{PSL}_{l+1}(k)$ and the universal Chevalley group is $\widetilde{G}\cong \mathrm{SL}_{l+1}(k)$. The twisted Chevalley group of adjoint type is 
 $G'_{\sigma}\cong \mathrm{PSU}_{l+1}(k,J)$ and the universal twisted Chevalley group is $\widetilde{G}'_{\sigma}\cong \mathrm{SU}_{l+1}(k,J)$, where 
\[J=\epsilon\begin{pmatrix} &&&&1\\ &&&-1&\\&&1&&\\&-1&&&\\\reflectbox{$\ddots$}\end{pmatrix}.\]
Here $\epsilon \in k$ such that $\epsilon+\bar{\epsilon}=0$ if $l$ is odd and $\epsilon=1$ if $l$ is even. Here $\bar\;:k\rightarrow k$ is an involutory automorphism of $k$ with fixed field $k_0$, i.e., $k$ is a degree two Galois extension of $k_0$. (A prototypical example is the Galois extension $\mathbb{C}$ over $\R$ with `bar' being the complex conjugate). 
\item\cite[p. 886, Section 9]{steinberg59} Let $\mathcal{L}=D_l\; (l\geq 5)$, then the adjoint Chevalley group is
 $G\cong \mathrm{P\Omega}_{2l}(k, B_D)$ and the universal Chevalley group is $\widetilde{G}\cong \mathrm{\Omega}_{2l}(k, B_D)$, where $B_D=x_1x_{-1}+\cdots+x_lx_{-l}$ (quadratic form over $k$). The adjoint twisted Chevalley group is 
$G'_{\sigma}\cong \mathrm{P\Omega}_{2l}(k_0, B)$ and the universal twisted Chevalley group is $\widetilde{G}'_{\sigma}\cong \mathrm{\Omega}_{2l}(k_0, B)$, where 
$B=x_1x_{-1}+\cdots+x_{l-1}x_{-(l-1)}+(x_l-dx_{-l})(x_l-\bar{d}x_{-l})$ (quadratic form over $k_0$), and  $k=k_0(d)$. Here $\bar\; : k\rightarrow k$ is an order two automorphism of $k$ with $k_0$ being its fixed field. 
\end{enumerate}
\end{example}
\subsection{Automorphisms of twisted Chevalley groups}
To study the $R_{\infty}$-property of a group, it is important to understand the automorphisms of the given group. In this section, we recall three fundamental types of automorphism of the twisted Chevalley group. We refer the reader to \cite[Section 12.2]{ca} for details.
\begin{definition}[Inner automorphism]
Given any $x\in G'_{\sigma}$, we define 
\[i_x:G'_{\sigma}\rightarrow G'_{\sigma} \text{ given by }i_x(g)=xgx^{-1}\] is an automorphism of $G'_{\sigma}$. The automorphism $i_x$ is called the inner automorphism of $G'_{\sigma}$ induced by $x$. 
\end{definition}
\begin{definition}[Diagonal automorphisms]
Let $h\in \widehat{H'}\setminus H'$. Then the map 
\[d_h:G'_{\sigma}\rightarrow G'_{\sigma} \text{ given by }d_h(g)=hgh^{-1}\] is an automorphism of $G'_{\sigma}$, since $G'_{\sigma}$ is normalized by $\widehat{H'}$. The automorphism $d_h$ is called a diagonal automorphism. The diagonal automorphisms of $G'_{\sigma}$ are obtained by conjugating with suitable diagonal matrices.
\end{definition}
\begin{definition}[Field automorphism]
Let $f$ be an automorphism of the field $k$, then the map 
\[\overline{f}:G'_{\sigma}\rightarrow G'_{\sigma} \text{ given by }\overline{f}(x_{\alpha}(t))=x_{\alpha}(f(t)),\;  \alpha \in \Phi, t\in k\] can be extended to an automorphism of $G'_{\sigma}$. The automorphisms obtained in this way are called field automorphisms. In terms of matrices this amount to replacing each term of the matrix by its image under $f$. \par
Now we are ready to state the following theorem due to Steinberg that is the main tool to prove our results. 
\begin{theorem}\cite[p. 111, Theorem 36]{St2}\label{staut}
With the preceding notations, let $\sigma=\overline{\rho}f(\neq \mathrm{Id})$ be the automorphism of the Chevalley group $G$. Then every automorphism $\varphi$ of the twisted Chevalley group $G_{\sigma}'$ is a product of an inner automorphism $i_g$, a diagonal automorphism $d_h$ and a field automorphism $\overline{f}$, i.e., $\varphi=\overline{f}\circ d_h \circ i_g$ for some $g\in G_{\sigma}'$ and $h\in \widehat{H'}$.
\end{theorem}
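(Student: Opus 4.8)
This is \cite[Theorem~36]{St2}; we sketch the shape of the argument, which is the twisted‑group analogue of the automorphism theorem for Chevalley groups. The structural input is that $G'_{\sigma}$ carries a split $BN$-pair with Borel subgroup $B'=U'H'$, monomial group $N'$, maximal torus $H'=B'\cap(B')^{-}$ where $(B')^{-}=V'H'$, and relative Weyl group $W'=N'/H'$; the minimal unipotent subgroups of $U'$ are parametrized by the \emph{relative} root system $\overline{\Phi}$ (of type $C$ or $BC$ for ${}^2A_l$, type $B$ for ${}^2D_l$, type $F_4$ for ${}^2E_6$, type $G_2$ for ${}^3D_4$), and $U'=\prod_{a\in\overline{\Phi}^{+}}X'_a$ with uniqueness of expression.

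First I would reduce to the case that $\varphi$ stabilizes $B'$ and $(B')^{-}$. The Borel subgroups of $G'_{\sigma}$ admit a purely group‑theoretic description and form a single conjugacy class, so $\varphi(B')=xB'x^{-1}$ for some $x$; replacing $\varphi$ by $i_x^{-1}\circ\varphi$ we may assume $\varphi(B')=B'$. The Borel subgroups opposite to $B'$ form one $U'$-orbit, so after a further adjustment by some $i_u$ with $u\in U'$ we get $\varphi((B')^{-})=(B')^{-}$, and hence $\varphi$ stabilizes $H'$, $U'$, $V'$, $N'$. Consequently $\varphi$ permutes the minimal positive root subgroups $X'_a$ and induces a symmetry of the relative Dynkin diagram; for each of the types in question this diagram has no nontrivial symmetry — the only candidates, the long/short swaps in the $B_2$-type relative systems occurring for small ${}^2A_l$, are excluded because they would interchange root subgroups with incompatible torus structure — so $\varphi$ stabilizes each $X'_a$ setwise. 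This is exactly the reason no graph‑automorphism factor appears in the conclusion.

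Next I would extract the field automorphism. For each simple relative root $a$, the group $X'_a$ is abelian or two‑step nilpotent with ``linear part'' a one‑ or two‑dimensional $k_0$-space, and $\varphi|_{X'_a}$ is an abstract automorphism intertwining the $H'$-action; the Chevalley commutator relations tying the various $X'_a$ together, combined with compatibility with the extension $k/k_0$ (the defining involution, or triality for ${}^3D_4$), force $\varphi$ to act on all of them through a single field automorphism $f_0$, up to rescaling the parameters. Composing with $\overline{f_0}^{-1}$ we may assume $\varphi$ rescales each simple $X'_a$ by a scalar; these scalars are realized by conjugation by a suitable $h\in\widehat{H'}$, so after composing with $d_h^{-1}$ the resulting automorphism fixes $U'$ pointwise, and by the symmetric argument on the negative side also $V'$ pointwise. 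Since $G'_{\sigma}=\langle U',V'\rangle$ it is then the identity, and unwinding the compositions (rearranging the factors, which is legitimate because inner, diagonal and field automorphisms are permuted among themselves under composition) yields $\varphi=\overline{f}\circ d_h\circ i_g$.

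The main obstacle is the field‑automorphism extraction in the twisted setting: unlike the untwisted case one must keep track of the order‑two (or order‑three) symmetry of $k$ throughout, and the small‑rank groups (${}^2A_2$, ${}^2A_3\cong{}^2D_3$, and the $B_2$-type relative systems) as well as very small fields need separate bookkeeping. A secondary point is passing between the universal group $\widetilde{G}'_{\sigma}$ and the quotient $G'_{\sigma}=\widetilde{G}'_{\sigma}/Z$; this is harmless because $Z(\widetilde{G}'_{\sigma})$ is finite (\remref{center}) and the central isogeny is respected by all three families of automorphisms.
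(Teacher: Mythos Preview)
The paper does not give its own proof of this statement: Theorem~\ref{staut} is simply quoted from Steinberg \cite[p.~111, Theorem~36]{St2} and used as a black box in the proofs of Theorems~\ref{mainthm1} and~\ref{mainthm2}. There is therefore nothing in the paper to compare your argument against.

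For what it is worth, your sketch follows the standard shape of Steinberg's proof (conjugate $\varphi$ so that it stabilizes $B'$ and $(B')^{-}$, deduce that it permutes the relative simple root groups, rule out a graph factor via the relative diagram, then peel off a field automorphism and a diagonal correction), so it is consistent with the source the paper cites rather than an alternative route.
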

\end{definition}
The following lemma will be used in the proof of our main theorems. 
\begin{lemma}\label{normality}
	Let $D$ be the group of all diagonal automorphisms of $G'_{\sigma}$ and $\Gamma$ be the group generated by all field and diagonal automorphisms of $G'_{\sigma}$, then $D$ is a normal subgroup of $\Gamma$. 
\end{lemma}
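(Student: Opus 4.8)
The plan is to show that conjugation by a field automorphism sends a diagonal automorphism to a diagonal automorphism, since $D$ is clearly closed under composition (and hence normal inside the subgroup generated by $D$ alone), and $\Gamma$ is generated by $D$ together with field automorphisms. So it suffices to check that for a field automorphism $\overline{f}$ and a diagonal automorphism $d_h$ with $h\in\widehat{H'}$, the conjugate $\overline{f}\circ d_h\circ\overline{f}^{-1}$ is again of the form $d_{h'}$ for some $h'\in\widehat{H'}$.

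First I would recall that a diagonal automorphism $d_h$ is given by conjugation by an element $h=h(\chi)\in\widehat{H'}$, where $h(\chi)$ acts on the Chevalley basis by $h(\chi)(e_\alpha)=\chi(\alpha)e_\alpha$ for a $k$-character $\chi$ of $\mathbb{Z}\Phi$ (extending Definition \ref{hhat} to the twisted setting). On generators this reads $d_h(x_\alpha(t))=x_\alpha(\chi(\alpha)t)$ for $\alpha\in\Phi$, $t\in k$. Then I would compute directly on the generators $x_\alpha(t)$ of $G'_\sigma$: since $\overline{f}(x_\alpha(t))=x_\alpha(f(t))$ and $\overline{f}^{-1}(x_\alpha(t))=x_\alpha(f^{-1}(t))$, we get
\[
(\overline{f}\circ d_h\circ\overline{f}^{-1})(x_\alpha(t))=\overline{f}\big(d_h(x_\alpha(f^{-1}(t)))\big)=\overline{f}\big(x_\alpha(\chi(\alpha)f^{-1}(t))\big)=x_\alpha\big(f(\chi(\alpha))\,t\big).
\]
Hence $\overline{f}\circ d_h\circ\overline{f}^{-1}=d_{h'}$ where $h'=h(\chi')$ corresponds to the $k$-character $\chi'=f\circ\chi:\mathbb{Z}\Phi\to k^\times$ (here I use that $f$ is an automorphism of $k$, so $f\circ\chi$ is still a group homomorphism into $k^\times$).

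The one genuine point to verify is that $h'=h(f\circ\chi)$ actually lies in $\widehat{H'}$, i.e. that it normalizes $G'_\sigma$, rather than merely lying in $\widehat{H}$. I would argue this using the commutation $f\circ\overline{\rho}=\overline{\rho}\circ f$ established in the excerpt, so that $\overline{f}$ commutes with $\sigma=\overline{\rho}f$ up to the relevant identifications; consequently $\overline{f}$ normalizes the fixed-point subgroup $G'_\sigma$, and therefore conjugation by $\overline{f}$ carries $N_{\Aut(\mathcal{L})}(G'_\sigma)$ into itself and $\widehat{H}$ into itself, hence $\widehat{H'}=\widehat{H}\cap N_{\Aut(\mathcal{L})}(G'_\sigma)$ into itself. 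This gives $h'\in\widehat{H'}$, so $d_{h'}\in D$. Since every element of $\Gamma$ is a product of field automorphisms and elements of $D$, and $D$ is stable under conjugation by each field automorphism and under composition, it follows that $\gamma D\gamma^{-1}=D$ for all $\gamma\in\Gamma$, which is the claim. The main obstacle is precisely the bookkeeping that the new character's diagonal automorphism stays in the twisted group $\widehat{H'}$ and not just in $\widehat{H}$; everything else is a routine computation on generators.
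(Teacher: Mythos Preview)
Your proof is correct and follows essentially the same approach as the paper: both compute $\overline{f}\circ d_h\circ \overline{f}^{-1}$ directly and identify it as $d_{\widetilde h}$ with $\widetilde h$ obtained by applying $f$ to the diagonal data of $h$; the paper does this entrywise on matrices $(g_{ij})$ whereas you do it on the generators $x_\alpha(t)$ via the character $\chi$, which is the same computation in different notation. Your argument is in fact a bit more careful than the paper's, since you explicitly verify that $\widetilde h=h(f\circ\chi)$ remains in $\widehat{H'}$ (normalizes $G'_\sigma$), a point the paper leaves implicit.
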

\begin{proof}
Clearly, $D$ is a subgroup of $\Gamma$. Let $\overline{f}$ be a field automorphism of $G'_{\sigma}$ and $d_h\in D$, where $h=\diag(h_1,  \ldots, h_{|\Delta|+|\Phi|})\in \widehat{H'}\setminus H', h_i\in k^{\times}$. Suppose $g=(g_{ij})\in G'_{\sigma}$, then we have 
 \begin{align*}
\overline{f}d_h\overline{f}^{-1}(g_{ij})&=\overline{f}d_h(f^{-1}(g_{ij}))=\overline{f}h(f^{-1}(g_{ij}))h^{-1}
=\overline{f}(h_if^{-1}(g_{ij})h_j^{-1})\\
&=(f(h_if^{-1}(g_{ij})h_j^{-1}))=(f(h_i)g_{ij}f(h_j^{-1}))=d_{\widetilde{h}}(g_{ij})
\end{align*}
where $\widetilde{h}=\diag(f(h_1), \ldots, f(h_{|\Delta|+|\Phi|}))$. Hence $D$ is a normal subgroup of $\Gamma$.
\end{proof}

\subsection{Some useful results}
The following two lemmas hold for arbitrary groups. 
Suppose $\varphi$ is an automorphism of a group $G$. Let $i_g$ be the inner automorphism of $G$ for some $g$ in $G$, i.e., $i_g(x)=gxg^{-1}$ for all $x\in G$. Let $\mathcal{R}(\varphi):=\{[g]_{\varphi}\mid g\in G \}$. Thus the Reidemeister number $R(\varphi)$ is the cardinality of $\mathcal{R}(\varphi)$. 	Now let $x,y\in G$ such that $[x]_{\varphi\circ i_g}=[y]_{\varphi\circ i_g}$. Then there exists a $z\in G$ such that
$$y=zx(\varphi\circ i_g)(z^{-1})=zx\varphi(gz^{-1}g^{-1})=zx\varphi(g)\varphi(z^{-1})\varphi(g^{-1}).$$ This implies that 
$y\varphi(g)=zx\varphi(g)\varphi(z^{-1})$, i.e, $[x\varphi(g)]_{\varphi}=[y\varphi(g)]_{\varphi}$. 
Thus we get a well-defined map \[\widehat{\varphi}: \mathcal{R}(\varphi\circ i_g)\longrightarrow \mathcal{R}(\varphi)\] given by  $\widehat{\varphi}([x]_{\varphi\circ i_g})=[x\varphi(g)]_{\varphi}$. The map $\widehat{\varphi}$ is bijective as well. We summarise this as follows:
\begin{lemma}\cite[Corollary 3.2]{FLT}\label{inner}
Suppose $\varphi \in \Aut(G)$ and $i_g \in \mathrm{Inn}(G)$ then $R(\varphi i_g)=R(\varphi)$. In particular, $R(i_g)=R(\mathrm{Id})$, i.e., the number of inner twisted conjugacy classes in $G$ is equal to the number of conjugacy classes in $G$.
\end{lemma}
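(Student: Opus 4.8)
The plan is to make the cardinality equality precise by exhibiting an explicit bijection $\widehat{\varphi}\colon \mathcal{R}(\varphi\circ i_g)\to\mathcal{R}(\varphi)$, namely the map already indicated in the paragraph preceding the statement, $\widehat{\varphi}([x]_{\varphi\circ i_g})=[x\varphi(g)]_{\varphi}$, and to verify that it is well defined and bijective. The starting point is the conjugation identity $(\varphi\circ i_g)(z^{-1})=\varphi(g)\varphi(z)^{-1}\varphi(g)^{-1}$, valid for all $z\in G$. With it, if $y=zx(\varphi\circ i_g)(z^{-1})$ for some $z$, then right-multiplying by $\varphi(g)$ gives $y\varphi(g)=zx\varphi(g)\varphi(z)^{-1}$, so $[x\varphi(g)]_\varphi=[y\varphi(g)]_\varphi$ and $\widehat{\varphi}$ is well defined.

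Next I would check that $\widehat{\varphi}$ is onto: for an arbitrary class $[y]_\varphi$, the element $y\varphi(g)^{-1}$ satisfies $\widehat{\varphi}\big([y\varphi(g)^{-1}]_{\varphi\circ i_g}\big)=[y\varphi(g)^{-1}\varphi(g)]_\varphi=[y]_\varphi$. For injectivity I would simply run the well-definedness computation in reverse: if $x'\varphi(g)=z\,(x\varphi(g))\,\varphi(z)^{-1}$ for some $z\in G$, then right-multiplying by $\varphi(g)^{-1}$ and regrouping $\varphi(g)\varphi(z)^{-1}\varphi(g)^{-1}=(\varphi\circ i_g)(z^{-1})$ yields $x'=zx(\varphi\circ i_g)(z^{-1})$, i.e. $[x]_{\varphi\circ i_g}=[x']_{\varphi\circ i_g}$. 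Hence $\widehat{\varphi}$ is a bijection, so $R(\varphi\circ i_g)=|\mathcal{R}(\varphi\circ i_g)|=|\mathcal{R}(\varphi)|=R(\varphi)$.

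The ``in particular'' clause then follows by specializing $\varphi=\mathrm{Id}$: since $\varphi\circ i_g=i_g$ we get $R(i_g)=R(\mathrm{Id})$, and $R(\mathrm{Id})$ is by definition the number of $\mathrm{Id}$-twisted conjugacy classes, which are precisely the ordinary conjugacy classes of $G$. I do not expect any genuine obstacle here: the content is essentially a one-line bijection, and the only point requiring care is keeping the conjugation identity $(\varphi\circ i_g)(z)=\varphi(g)\varphi(z)\varphi(g)^{-1}$ consistent between the forward (well-definedness and surjectivity) and backward (injectivity) computations, so that $\widehat{\varphi}$ and its inverse genuinely match. This lemma is the technical device that will later let us replace an arbitrary automorphism $\varphi=\overline{f}\circ d_h\circ i_g$ of $G'_\sigma$ (see \thmref{staut}) by the product $\overline{f}\circ d_h$ of a field and a diagonal automorphism when computing Reidemeister numbers.
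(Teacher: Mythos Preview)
Your proposal is correct and follows exactly the paper's approach: the paper constructs the same map $\widehat{\varphi}([x]_{\varphi\circ i_g})=[x\varphi(g)]_{\varphi}$ in the paragraph preceding the lemma, verifies well-definedness via the identical computation, and then simply asserts bijectivity. Your argument merely supplies the explicit surjectivity and injectivity checks that the paper leaves to the reader.
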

The following result appears in \cite[Lemma 2.2]{MS}. We include a proof for the sake of completeness. 
\begin{lemma}\label{msqtog}
	Let $1\rightarrow N\overset{i}{\rightarrow} G\overset{\pi}{\rightarrow} Q\rightarrow 1$ be an exact sequence of groups. Suppose that $N$ is a characteristic subgroup of $G$, i.e., $\varphi(N)=N$ for all $\varphi\in \Aut(G)$. If $Q$ has the $R_{\infty}$-property, then $G$ also has the $R_{\infty}$-property.
\end{lemma}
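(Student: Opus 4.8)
The plan is to use a standard transfer argument: given an automorphism $\varphi$ of $G$, the hypothesis that $N$ is characteristic guarantees that $\varphi$ restricts to an automorphism of $N$ and hence descends to an automorphism $\overline{\varphi}$ of the quotient $Q = G/N$. The idea is then to push $\varphi$-twisted conjugacy classes of $G$ forward along $\pi$ to $\overline{\varphi}$-twisted conjugacy classes of $Q$, show this map is surjective, and conclude that $R(\varphi) \geq R(\overline{\varphi})$. Since $Q$ has the $R_\infty$-property, $R(\overline{\varphi}) = \infty$, so $R(\varphi) = \infty$; as $\varphi$ was arbitrary, $G$ has the $R_\infty$-property.

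First I would record that because $\varphi(N) = N$, the map $\overline{\varphi}\colon Q \to Q$ defined by $\overline{\varphi}(\pi(g)) = \pi(\varphi(g))$ is well defined and is an automorphism of $Q$ (its inverse is induced by $\varphi^{-1}$, which also preserves $N$). Next I would define $\pi_*\colon \mathcal{R}(\varphi) \to \mathcal{R}(\overline{\varphi})$ by $\pi_*([g]_\varphi) = [\pi(g)]_{\overline{\varphi}}$ and check it is well defined: if $y = zx\varphi(z)^{-1}$ in $G$, then applying $\pi$ gives $\pi(y) = \pi(z)\pi(x)\overline{\varphi}(\pi(z))^{-1}$, so $[\pi(x)]_{\overline{\varphi}} = [\pi(y)]_{\overline{\varphi}}$. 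Surjectivity of $\pi_*$ is immediate from surjectivity of $\pi$: any class in $\mathcal{R}(\overline{\varphi})$ is represented by some $\pi(g)$. Hence $|\mathcal{R}(\varphi)| \geq |\mathcal{R}(\overline{\varphi})|$, i.e.\ $R(\varphi) \geq R(\overline{\varphi})$.

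Finally, since $Q$ has the $R_\infty$-property, $R(\overline{\varphi}) = \infty$, and therefore $R(\varphi) = \infty$. As $\varphi$ was an arbitrary automorphism of $G$, it follows that $G$ has the $R_\infty$-property, completing the proof. There is essentially no obstacle here — the only point requiring any care is verifying that the characteristic hypothesis on $N$ is exactly what makes $\overline{\varphi}$ well defined as an automorphism of $Q$; the rest is a routine diagram chase, and in particular one does not even need $N$ characteristic for the well-definedness of $\pi_*$ once $\overline{\varphi}$ exists, nor any injectivity statement.
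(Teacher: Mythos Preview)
Your proof is correct and follows essentially the same approach as the paper: both use that $\varphi$ descends to an automorphism $\overline{\varphi}$ of $Q$ (since $N$ is characteristic), define the induced surjection $\mathcal{R}(\varphi)\to\mathcal{R}(\overline{\varphi})$ via $[g]_\varphi\mapsto[\pi(g)]_{\overline{\varphi}}$, and conclude $R(\varphi)\geq R(\overline{\varphi})=\infty$. Your write-up in fact spells out the well-definedness check of $\pi_*$ more explicitly than the paper does.
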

\begin{proof}
Suppose that $\varphi$ is any automorphism of $G$. Since $\varphi(N)=N$ then  $\varphi$ induces an automorphism $\overline{\varphi}$ of $Q\cong G/N$ 
such that the  following diagram commutes:
\[\xymatrix{
	1\ar[r] & N\ar[r]^{i}\ar[d]_{\varphi|_{N}}& G\ar[r]^{\pi} \ar[d]^{\varphi}& Q\ar[r]\ar[d]^{\overline{\varphi}}&1 \\
	1\ar[r] & N\ar[r]^{i}& G\ar[r]^{\pi}& Q\ar[r]&1 
}\] 
where $\varphi|_{N}$ is the automorphism of $N$. In particular, we have $\overline{\varphi}\circ \pi=\pi\circ \varphi$.
Now, observe that $\pi$ induces a surjective map $\widehat{\pi}:\mathcal{R}(\varphi)\longrightarrow\mathcal{R}(\overline{\varphi})$ given by $\widehat{\pi}([x]_{\varphi})=[\pi(x)]_{\overline{\varphi}}$ for all $x\in G$. Hence $R(\varphi)\geq R(\overline{\varphi})$. Thus $G$ has the $R_{\infty}$-property since $Q$ has so.
\end{proof}
Let $\Q$ be the field of rational numbers, $S$ be the set of prime numbers and $2^{S}$ be the set of all subsets of $S$. Then define the following map 
\[\nu:\Q \rightarrow 2^{S}\] by $\nu(a/b)=\{\text{all prime divisors of }a\}\cup \{\text{all prime divisors of b}\}$, where $a$ and $b$ are mutually prime integers. Using the map $\nu$, Nasybullov \cite[Lemma 2.5]{FN} proved the following result.
\begin{lemma}\label{lemma5}
Let $k$ be a field of characteristic zero such that the transcendence degree of $k$ over $\Q$ is finite. If the automorphism $f$ of the field $k$ acts on the elements $z_1, z_2, \ldots$ of the field $k$ by the rule
\[f:z_i\mapsto \alpha a_iz_i, \]
where $\alpha\in k$, $1\neq a_i\in \Q\subset k$ and $\nu(a_i)\cap \nu(a_j)=\emptyset$ for $i\neq j$, then there are only a finite number of non-zero elements among $z_1, z_2, \ldots$. 
\end{lemma}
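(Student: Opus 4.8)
The plan is to argue by contradiction, playing the finiteness of $r:=\operatorname{trdeg}_{\Q}k$ against the combinatorics of the map $\nu$. Suppose infinitely many of the $z_i$ are nonzero; after deleting the vanishing ones and reindexing I may assume $z_1,\dots,z_{r+2}$ are all nonzero. The scalar $\alpha$ is a nuisance, so the first move is to pass to the ratios $y_s:=z_s z_{r+2}^{-1}$ for $1\le s\le r+1$. Since $f$ is a field automorphism, $f(y_s)=f(z_s)f(z_{r+2})^{-1}=(a_s a_{r+2}^{-1})\,y_s=:b_s y_s$ with $b_s\in\Q^{\times}$, and $\nu(b_s)\subseteq\nu(a_s)\cup\nu(a_{r+2})$; what will matter is that $\nu(a_1),\dots,\nu(a_{r+2})$ are pairwise disjoint and (by the hypothesis on the $a_i$) nonempty.

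Next I would use that $r+1>r$ forces $y_1,\dots,y_{r+1}$ to be algebraically dependent over $\Q$: there is a nonzero $P=\sum_{\mathbf e}c_{\mathbf e}X^{\mathbf e}\in\Q[X_1,\dots,X_{r+1}]$ with $P(y_1,\dots,y_{r+1})=\sum_{\mathbf e}c_{\mathbf e}y^{\mathbf e}=0$, where $y^{\mathbf e}:=\prod_s y_s^{e_s}$; among all such relations I pick one with the fewest monomials. Applying $f$ gives $\sum_{\mathbf e}c_{\mathbf e}b^{\mathbf e}y^{\mathbf e}=0$ with $b^{\mathbf e}:=\prod_s b_s^{e_s}\in\Q^{\times}$. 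Fixing one monomial $\mathbf e_0$ of $P$ and forming $b^{\mathbf e_0}$ times the first relation minus the second yields $\sum_{\mathbf e}c_{\mathbf e}(b^{\mathbf e_0}-b^{\mathbf e})y^{\mathbf e}=0$, again a relation over $\Q$ but with the monomial $\mathbf e_0$ now absent; by minimality it must be the trivial polynomial, i.e. $b^{\mathbf e}=b^{\mathbf e_0}$ for every monomial $\mathbf e$ occurring in $P$.

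The final step is purely arithmetic. From $b^{\mathbf e}=b^{\mathbf e_0}$ I get $\prod_{s=1}^{r+1}b_s^{n_s}=1$ with $n_s:=e_s-(e_0)_s\in\mathbb{Z}$, equivalently $\prod_s a_s^{n_s}=a_{r+2}^{\sum_s n_s}$. For each index $s_0$ choose a prime $p\in\nu(a_{s_0})$; comparing $p$-adic valuations, every factor on the left except the $s_0$-th contributes $0$ (disjoint supports) and the right-hand side contributes $0$ (since $p\notin\nu(a_{r+2})$), so $n_{s_0}v_p(a_{s_0})=0$ and hence $n_{s_0}=0$. As this holds for all $s_0$, every monomial of $P$ equals $\mathbf e_0$, so $P=c_{\mathbf e_0}X^{\mathbf e_0}$; but then $c_{\mathbf e_0}\prod_s y_s^{(e_0)_s}=0$ with $c_{\mathbf e_0}\ne 0$ and all $y_s\ne 0$, which is absurd. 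This contradiction shows that only finitely many $z_i$ are nonzero.

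I expect the genuinely delicate point to be this last step, and specifically the need for each support $\nu(a_{s_0})$ to be nonempty so that a prime $p$ at which to compare valuations actually exists: if some $b_s$ were a root of unity in $\Q$ the valuation argument would stall and one could not collapse $P$ to a single monomial. This nonemptiness, together with the pairwise disjointness, is exactly the arithmetic input that the assumption on the $a_i$ supplies; the surrounding bookkeeping with the minimal relation $P$ is routine. One can instead work with the $z_i$ directly, but then $\alpha$ must be tracked through the exponent sums, so replacing $z_i$ by $z_i z_{r+2}^{-1}$ at the outset keeps the argument cleanest.
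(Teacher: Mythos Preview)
The paper does not give its own proof of this lemma; it simply cites \cite[Lemma~2.5]{FN}. So there is nothing in the present paper to compare your argument against.

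Your proof is correct and is in essence the standard one: force a minimal $\Q$-algebraic relation among $r+1$ of the elements using the transcendence-degree bound, apply $f$ and subtract to eliminate a monomial, deduce by minimality that all monomials carry the same weight $b^{\mathbf e}$, and then use $p$-adic valuations together with the disjoint prime supports to conclude all exponent differences vanish. Passing to the ratios $y_s=z_sz_{r+2}^{-1}$ to kill $\alpha$ is a clean simplification.

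One remark on the point you already flag. Your last step needs each $\nu(a_{s_0})$ to be nonempty so that a witnessing prime exists. The hypothesis as literally stated in the paper, $a_i\neq 1$, does not exclude $a_i=-1$, for which $\nu(-1)=\emptyset$; and indeed with $k=\Q$, $f=\mathrm{Id}$, $\alpha=-1$, $a_i=-1$, $z_i=1$ the conclusion fails. This is a harmless imprecision in the statement rather than in your argument: in the only place the lemma is used (the proof of \thmref{mainthm1}) the stronger condition $\nu(a_{ij})\neq\emptyset$ is explicitly verified, and your proof goes through under that hypothesis.
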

 We include a proof of the following lemma which can be found in \cite[Lemma 6]{NaT}.
\begin{lemma}\label{lemma6}
	Let $R$ be an integral domain and $M$ be an infinite subset of $R$. Let $f(T)$ be a non-constant rational function with coefficients from the ring $R$. Then the set $P=\{f(a)\mid a\in M\}$ is infinite.
\end{lemma}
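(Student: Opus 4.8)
The plan is to argue by contradiction, combining the pigeonhole principle with the elementary fact that a nonzero polynomial over a field has only finitely many roots. First I would write $f(T)=g(T)/h(T)$ with $g,h\in R[T]$ and $h\neq 0$, and pass to the fraction field $K:=\mathrm{Frac}(R)$, so that $M$ becomes an infinite subset of the field $K$ (the inclusion $R\hookrightarrow K$ being injective since $R$ is a domain) and $f$ becomes a non-constant element of $K(T)$, i.e. $f\notin K$. Since $h$ has at most $\deg h$ roots in $K$, I may delete these finitely many elements from $M$ and assume that $h(a)\neq 0$, hence $f(a)\in K$ is defined, for every $a\in M$; the set $M$ remains infinite.

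Next, suppose for contradiction that $P=\{f(a)\mid a\in M\}$ is finite, say $P=\{p_1,\dots,p_m\}\subseteq K$. Writing $M$ as the finite union $\bigcup_{j=1}^{m}\{a\in M\mid f(a)=p_j\}$ and using that $M$ is infinite, I obtain an index $j$ such that $M_j:=\{a\in M\mid f(a)=p_j\}$ is infinite. Every $a\in M_j$ satisfies $g(a)=p_j\,h(a)$, i.e. $a$ is a root of the polynomial $q_j(T):=g(T)-p_j\,h(T)\in K[T]$. The key point is that $q_j$ is not the zero polynomial: if $q_j=0$ then $g=p_j h$, and therefore $f=g/h=p_j$ would be a constant in $K(T)$, contradicting the non-constancy of $f$. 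Since $K$ is a field, the nonzero polynomial $q_j$ has at most $\deg q_j$ roots in $K$, which contradicts $M_j$ being infinite. Hence $P$ is infinite.

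I do not expect any genuine obstacle here; the argument is short, and the only technical care needed is (i) passing to the fraction field $K$ so that ``finitely many roots'' applies — this is exactly where the integral-domain hypothesis on $R$ is used, both to form $K$ and to keep the image of $M$ in $K$ infinite — and (ii) the verification that $g-p_j h\neq 0$, which is the unique spot where the hypothesis that $f$ is non-constant enters. One could equally well avoid the fraction field altogether by fixing a single $a_0\in M_j$ and noting that every $a\in M_j$ is a root of $h(a_0)g(T)-g(a_0)h(T)\in R[T]$, a polynomial that is nonzero (again by non-constancy of $f$) and hence has only finitely many roots in the domain $R$; I would present whichever version reads more cleanly in context.
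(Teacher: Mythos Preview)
Your argument is correct and essentially identical to the paper's: both write $f=g/h$, pass to the fraction field $K$, assume $P$ finite, use pigeonhole to get an infinite fiber over some value $c$, and conclude that $g-ch$ has infinitely many roots in $K$, forcing $f$ to be constant. You are slightly more careful (explicitly discarding the zeros of $h$ and justifying that $g-ch\neq 0$), but the route is the same.
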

\begin{proof}
Let $f(T)=\frac{g(T)}{h(T)}$, where $g(T),h(T)\in R[T]$ with $h(T)\neq 0$. If possible, suppose that the set $P$ is finite. That is for an infinite subset $\{a_i\mid i=1,2,\dots\}$ of $M$ we have $f(a_i)=c\in R$ (say). In that case, the polynomial $\alpha(T)=g(T)-ch(T)\in K[T]$ has infinitely many roots in $K$, where $K$ is the field of fraction of $R$. Thus $\alpha(T)=0$ and hence, $f(T)=c$ is a constant function, a contradiction.
\end{proof}
 The proof of the following result is similar to the Chevalley group case as in \cite[Lemma 7]{NaT} and hence we skip the details.
\begin{lemma}\label{lemma7}
Let $g(T)=h_{\alpha_{1}}(T)h_{\alpha_{2}}(T)\cdots h_{\alpha_{l}}(T)$ (here $h_{\alpha_i}(T)$ as in equation \eqref{torus}) be an element of the twisted Chevalley group over $k(T)$ and $\chi: \mathbb{Z}\Phi \rightarrow k^{\times}$ be a homomorphism. Then for any $m$, the  element $g(T)^mh(\chi)$ with respect to the Chevalley basis has a diagonal form such that its trace belongs to $k(T)\setminus k$. Here $k(T)$ denotes the field of rational functions with one variable $T$ over the field $k$. 
\end{lemma}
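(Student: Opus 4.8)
The plan is to compute the operator $g(T)^m h(\chi)$ directly on the Chevalley basis $\{e_\beta \mid \beta\in\Phi\}\cup\{h_\delta \mid \delta\in\Delta\}$ of $\mathcal{L}_{k(T)}$ and read off its trace as a Laurent polynomial in $T$. By \defref{hhat} and the description of $h_\alpha(t)$ immediately after it, every torus element $h_\alpha(T)$ acts diagonally by $e_\beta\mapsto T^{\langle\beta,\alpha^\vee\rangle}e_\beta$ and fixes each $h_\delta$, while $h(\chi)$ acts by $e_\beta\mapsto\chi(\beta)e_\beta$ and fixes each $h_\delta$. Hence $g(T)=h_{\alpha_1}(T)\cdots h_{\alpha_l}(T)$ acts by $e_\beta\mapsto T^{\langle\beta,\lambda\rangle}e_\beta$, where $\lambda:=\alpha_1^\vee+\cdots+\alpha_l^\vee$ is the sum of the simple coroots, so $g(T)^m h(\chi)$ is diagonal in the Chevalley basis and
\[
\mathrm{tr}\bigl(g(T)^m h(\chi)\bigr)=|\Delta|+\sum_{\beta\in\Phi}\chi(\beta)\,T^{m\langle\beta,\lambda\rangle}.
\]
This settles the ``diagonal form'' assertion, so it remains to prove the right-hand side is non-constant whenever $m\neq0$.

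For this I would exhibit an exponent of $T$ attained by exactly one root $\beta$, so that the coefficient of that power of $T$ equals $\chi(\beta)\in k^\times$ and cannot be cancelled. Here it is essential that the types carrying a graph automorphism --- $A_l$, $D_l$, $E_6$ --- are all simply-laced. Normalising the form so that $(\alpha,\alpha)=2$ for every root, we get $\alpha^\vee=\alpha$ and therefore $\langle\beta,\lambda\rangle=(\beta,\rho')$ with $\rho':=\alpha_1+\cdots+\alpha_l$. Since the Dynkin diagram is a tree with $l-1$ edges and $(\alpha_i,\alpha_j)=-1$ for adjacent nodes, $0$ otherwise, a one-line computation gives $(\rho',\rho')=2l-2(l-1)=2$; as $\rho'\in\mathbb{Z}\Phi$ has squared length $2$, it is itself a root (in an $ADE$ root lattice the vectors of squared length $2$ are exactly the roots). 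Cauchy--Schwarz then gives $(\beta,\rho')\le\sqrt{(\beta,\beta)}\sqrt{(\rho',\rho')}=2$ for every root $\beta$, with equality only for $\beta=\rho'$; and since $\langle\beta,\lambda\rangle=(\beta,\rho')$ is an integer, every root other than $\rho'$ satisfies $\langle\beta,\lambda\rangle\le1$.

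Combining these facts, for $m\neq0$ the exponent $2m$ in the displayed trace is attained by the single root $\rho'$, so its coefficient is $\chi(\rho')\in k^\times$, whereas every other exponent $m\langle\beta,\lambda\rangle$ is strictly less than $2m$ when $m>0$ (resp. strictly greater when $m<0$), and the $h_\delta$-part contributes only to the $T^0$ term. Hence $\mathrm{tr}(g(T)^m h(\chi))$ is a genuinely non-constant element of $k[T,T^{-1}]\subset k(T)$, i.e. it lies in $k(T)\setminus k$. I would also record the routine point that $g(T)$ is indeed an element of the twisted group over $k(T)$: extending the field automorphism $f$ of the pair $\sigma=\overline{\rho}f$ by $f(T)=T$ and using that $\rho$ merely permutes $\Delta$ while $H$ is abelian, one gets $\sigma(g(T))=\prod_i h_{\alpha_i}(f(T))=g(T)$.

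The only real obstacle is the uniqueness of the maximiser --- that $\rho'=\sum_i\alpha_i$ is a root and is the unique root pairing maximally with $\lambda$. This is precisely where simple-lacedness enters, and it is the step that runs parallel to, but is lighter than, the analogous root-system bookkeeping for arbitrary Chevalley groups in \cite[Lemma~7]{NaT}.
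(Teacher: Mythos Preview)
Your proof is correct. The paper itself does not give a proof of this lemma: it merely states that ``the proof of the following result is similar to the Chevalley group case as in \cite[Lemma~7]{NaT} and hence we skip the details.'' What you have written is exactly the kind of argument being alluded to --- computing the diagonal action of $g(T)^m h(\chi)$ on the Chevalley basis and identifying a power of $T$ occurring with nonzero coefficient --- but tailored to the present (simply-laced) situation.

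The one place where your argument differs in flavour from the general Chevalley-group version is your observation that in types $A_l$, $D_l$, $E_6$ the sum $\rho'=\sum_i\alpha_i$ has squared length $2$ and hence is itself a root, so Cauchy--Schwarz immediately isolates a unique root attaining the extremal pairing $\langle\beta,\lambda\rangle=2$. This is a genuine simplification over the case-by-case root-system bookkeeping needed for non-simply-laced types in \cite{NaT}, and it is exactly the simplification the twisted setting affords (since over characteristic zero only $A$, $D$, $E$ admit nontrivial graph automorphisms). Your side remark that $g(T)$ is $\sigma$-fixed because $\rho$ merely permutes $\Delta$ and $H$ is abelian is also the right justification; in the paper's application (proof of \thmref{mainthm2}) one only ever substitutes rational values $T=x_i$, where the field automorphism acts trivially, so this point is used implicitly there as well.

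One tiny caveat: the lemma as stated says ``for any $m$'', but of course the trace is constant when $m=0$; you correctly handle $m\neq 0$, which is all that is ever invoked (with $m=n\geq 1$ the order of the field automorphism) in the proof of \thmref{mainthm2}.
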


\section{Proofs of the main results}\label{mainsection}
We are now ready to establish our two main theorems.
\subsection{Proof of \thmref{mainthm1}}\label{proof1}
The argument in the case of Chevalley groups may be followed here verbatim. In view of equation \eqref{qtog} and \lemref{msqtog}, it is enough to prove this theorem for twisted Chevalley group of adjoint type $G'_{\sigma}$. 
Let $\varphi \in \Aut(G'_{\sigma})$, then by  \thmref{staut} we have $\varphi=\overline{f}\circ d_h\circ i_g$, where $i_g$ is an inner automorphism for some $g\in G'_{\sigma}$, $d_h$ is a diagonal automorphism for some $h\in \widehat{H'}$ and $\overline{f}$ is a field automorphism. By Lemma \ref{inner}, we may assume that $\varphi=\overline{f}\circ d_h$. In view of Lemma \ref{normality}, we have $\varphi=d_{\widetilde{h}}\circ \overline{f}$ for some $\widetilde{h}\in \widehat{H'}$. 
	
\noindent
\textbf{Claim:} $R(\varphi)=\infty$. 
	
Suppose if possible $R(\varphi)<\infty$. Let
\begin{align*}
g_i=h_{\alpha_1}(p_{i1})h_{\alpha_2}(p_{i2})\cdots h_{\alpha_l}(p_{il}),
\end{align*}
where $p_{11}<p_{12}<\cdots <p_{1l}<p_{21}<p_{22}<\cdots$ are primes and $\{\alpha_{1}, \alpha_{2}, \ldots, \alpha_l\}=\Delta$ is the set of all simple roots. Now writing $g_i$ with respect to the aforementioned Chevalley basis \eqref{chevalleybasis}, we get 
	\[g_i=\diag(a_{i1}, a_{i2}, \ldots, a_{i{|\Phi|}};\underbrace{1, \ldots, 1}_l),\] 
	where $a_{ij}\in \Q$ such that $\nu(a_{ij})\neq \emptyset$ and $\nu(a_{ij})\cap \nu(a_{rs})=\emptyset$ for $i\neq r$, since $\nu(a_{ij})\subset \{p_{i1}, \ldots, p_{il}\}$ for all $i, j$. Since $a_{ij}\in \Q$ and field automorphism acts identically on the prime subfield $\Q$ of $k$, then $\overline{f}(g_i)=g_i$. Also, the diagonal automorphism $d_h$ acts by conjugation. Here $h$ is an $(|\Phi|+|\Delta|)\times (|\Phi|+|\Delta|)$-diagonal matrix and $g_i$ is diagonal as well, so $d_h(g_i)=g_i$ for all $i$.
Therefore $\varphi(g_i)=(\overline{f}\circ d_h)(g_i)=g_i$ for all $i$. Since we are assuming that the number of $\varphi$-twisted conjugacy classes is finite, without loss of generality, we may also assume that $g_i\sim_{\varphi} g_1$ for all $i=2, 3, \ldots$. Therefore 
	\begin{align*}
	g_1&=z_ig_i\varphi(z_i^{-1})=z_ig_i(\overline{f}d_h)(z_i^{-1})=z_ig_i(d_{\widetilde{h}}\overline{f})(z_i^{-1})=z_ig_i\widetilde{h}\overline{f}(z_i^{-1})
	\widetilde{h}^{-1}
	\end{align*}
	for some $z_i\in G'_{\sigma}$ for all $i=2,3, \ldots$. Hence $g_1\widetilde{h}=z_i(g_i\widetilde{h})
	\overline{f}(z_i^{-1})$.
	This implies that
	\begin{align}\label{field}
	\overline{f}(z_i)&=(g_1\widetilde{h})^{-1}z_i(g_i\widetilde{h}).
	\end{align}
	Let \[\widetilde{h}=\diag(b_1, b_2, \ldots, b_{|\Phi|}; \underbrace{1, \ldots, 1}_l)\in \widehat{H'},\] 
	then we have 
\begin{align*}
	g_i\widetilde{h}=\diag(a_{i1}b_1, a_{i2}b_2, \ldots, a_{i|\Phi|}b_{|\Phi|}; \underbrace{1, \ldots, 1}_l)
\end{align*}
for all $i=1,2,\ldots $.
Let $z_i=\begin{pmatrix}Q_i&R_i\\S_i&T_i\end{pmatrix}$ be a block matrix, where 
$Q_i=(q_{i, mn})_{|\Phi|\times |\Phi|}$, $R_i=(r_{i, mn})_{|\Phi|\times |\Delta|}, S_i=(s_{i, mn})_{|\Delta|\times |\Phi|}$ and $T_i=(t_{i, mn})_{|\Delta|\times |\Delta|}$. 
	Then by equation \eqref{field}, for all $m, n=1, 2, \ldots, |\Phi|$, we have 
	\[f(q_{i, mn})=a_{1m}^{-1}b_m^{-1}a_{in}b_nq_{i, mn}=c_{mn}a_{in}q_{i, mn},\] where $c_{mn}=(a_{1m}b_mb_n^{-1})^{-1}$. Since $\nu(a_{in})\neq \emptyset$ and $\nu(a_{in})\cap \nu(a_{jn})=\emptyset$ for $i\neq j$, then we can apply \lemref{lemma5} to the elements $q_{2, mn}, q_{3, mn}, \ldots$ and we get $q_{j, mn}=0$ for all $j>N_{mn}$ for some integer $N_{mn}$. Choose $N=\mathrm{max}_{m, n=1, 2, \ldots,  |\Phi|}N_{mn}$,  then we have $Q_j=(q_{j, mn})_{|\Phi|\times |\Phi|}=O_{|\Phi|\times |\Phi|}$ for all $j>N$ (by $O_{m\times n}$, we mean the $m\times n$ matrix all of whose entries are zero). Using the similar arguments to the matrices $\{S_i\}_{i}$, we conclude that all the matrices $S_i$ reduces to the zero matrix for sufficiently large indices $i$. Hence the matrix $z_i$ has the following form for sufficiently large indices $i$:
\[z_i=\begin{pmatrix}O_{|\Phi|\times |\Phi|}&R_i\\O_{|\Delta|\times |\Phi|}&T_i\end{pmatrix}.\]
Therefore, for $i$ sufficiently large, $\mathrm{det}(z_i)=0$, which is a contradiction as $z_i\in G'_{\sigma}\leq \Aut(\mathcal{L}_k)$. 
Hence $R(\varphi)=\infty$. 

This completes the proof. 
\subsection{Proof of \thmref{mainthm2}}
In view of equation \eqref{qtog} and \lemref{msqtog}, it is enough to show this theorem for twisted Chevalley group of adjoint type $G'_{\sigma}$. 
Let $\varphi \in \Aut(G'_{\sigma})$, then as in the proof of Theorem 1.2 in \secref{proof1}, we may assume that $\varphi=\overline{f}\circ d_h$. Since $\Aut(k)$, the automorphism group of the field $k$, is periodic, the field automorphism $\overline{f}$ is of finite order, say $n$. 
	
	\noindent
	\textbf{Claim:} $R(\varphi)=\infty$. 
	
	Suppose if possible $R(\varphi)<\infty$. Let $g(T)=h_{\alpha_{1}}(T)h_{\alpha_{2}}(T)\cdots h_{\alpha_{l}}(T)$ be an element of the twisted Chevalley group over $k(T)$, where $\alpha_1, \ldots, \alpha_l$ are the simple roots. Consider the elements $g_i:=g(x_i)$, where $\{x_i\}$ is an infinite set of non-zero rational numbers. 
	Since we are assuming that the number of $\varphi$-twisted conjugacy classes is finite, without loss of generality, we may also assume that $g_i\sim_{\varphi} g_1$ for all $i=2, 3, \ldots$.
Therefore 
	\[g_i=z_ig_1\varphi({z_i^{-1}})\] for some $z_i\in G'_{\sigma}$ for all $i=2,3, \ldots $. Then we have 
	\begin{equation}\label{keyeq}
	\begin{split}
	g_i&=z_ig_1\varphi(z_i^{-1})\\
	\varphi(g_i)&=\varphi(z_i)\varphi(g_1)\varphi^2(z_i^{-1})\\
	\varphi^2(g_i)&=\varphi^2(z_i)\varphi^2{(g_1)}\varphi^3(z_i^{-1})\\
	&\ldots\\
	\varphi^{n-1}(g_i)&=\varphi^{n-1}(z_i)\varphi^{n-1}(g_1)\varphi^n(z_i^{-1}). 
	\end{split}
	\end{equation}
	In view of \lemref{normality}, for all $r=1, 2, \ldots, n$, we get $\overline{f}^r\circ d_h=d_{\widetilde{h}}\circ \overline{f}^r$ for some $\widetilde{h}\in \widehat{H'}$. Therefore $\varphi^r=d_{\widetilde{h}}\circ \overline{f}^r$ for all $r$.
	Since the field automorphism acts as an identity on the prime subfield $\Q$ of $k$, then $\overline{f}(g_i)=g_i$. Also, the diagonal automorphism $d_h$ acts by conjugation, where $h$ is an $(|\Phi|+|\Delta|)\times (|\Phi|+|\Delta|)$-diagonal matrix and $g_i$ is diagonal as well, so $d_h(g_i)=g_i$ for all $i$.
	Therefore $\varphi(g_i)=(\overline{f}\circ d_h)(g_i)=g_i$ for all $i$. 
	Therefore $\varphi^r(g_i)=g_i$ for all $r$.
	From equation \eqref{keyeq}, we get $g_i^n=z_ig_1^nd_{\widetilde{h}}(z_i^{-1})=z_ig_1^n\widetilde{h}z_i^{-1}\widetilde{h}^{-1}$, since $\overline{f}^n=\mathrm{Id}$.
Then \[g_i^n\widetilde{h}=z_i(g_1^n\widetilde{h})z_i^{-1},\] where $z_i\in G'_{\sigma}$ and $i=2, 3, \ldots$. Since $g_i^n\widetilde{h}=g(x_i)^n\widetilde{h}$ and $g(x_1)^n\widetilde{h}=g_1^n\widetilde{h}$ are conjugate in $\GL_{|\Phi|+|\Delta|}(k)$, their traces are equal for all $i=2, 3, \ldots$. In view of \lemref{lemma7}, we see that  $\mathrm{trace}(g(x_i)^n\widetilde{h})=\mathrm{trace}(g(x_1)^n\widetilde{h})\in k(T)\setminus k$, which contradicts \lemref{lemma6}. 
Therefore $R(\varphi)=\infty$.

This completes the proof.
\section{Isogredience classes and the $S_{\infty}$-property}\label{isogred}
Let $G$ be an arbitrary group. 
Suppose $\Psi\in \Out(G):=\Inn(G)\backslash\Aut(G)$. Two elements $\alpha, \beta\in \Psi$ are said to be \emph{isogredient} (or similar) if $\beta=i_h\circ \alpha \circ i_{h^{-1}}$ for some $h\in G$, where $i_h(g)=hgh^{-1}$ for all $g\in G$. Observe that  this is an equivalence relation on $\Psi$. Fix a representative $\gamma\in \Psi$. Then $\alpha=i_a\circ \gamma$ and $\beta=i_b\circ \gamma$ for some $a,b\in G$. Therefore $i_b\circ \gamma=\beta=i_h\circ \alpha \circ i_{h^{-1}}
=i_h\circ i_a\circ \gamma \circ i_{h^{-1}}$. Then we have  
\[i_b=i_h\circ i_a\circ (\gamma \circ i_{h^{-1}}\circ\gamma^{-1})=i_h\circ i_a\circ i_{\gamma(h^{-1})}=i_{ha\gamma(h^{-1})}.\]
Thus, $\alpha$ and $\beta$ are isogredient if and only if $b=ha\gamma(h^{-1})c$ for some $c\in Z(G)$ the center of $G$.
Let $S(\Psi)$ denote the number of isogredience classes of $\Psi$. If $\Psi=\overline{\textup{Id}}=\Inn(G)\textup{Id}_G$, then $S(\overline{\textup{Id}})$ is the number of usual conjugacy classes of $G/Z(G)$. 
A group $G$ has the \emph{$S_{\infty}$-property} if $S(\Psi)=\infty$ for all $\Psi\in \Out(G)$. Observe that if a group $G$ possesses the $S_{\infty}$-property then $G/Z(G)$ possesses the $R_{\infty}$-property, and hence by \lemref{msqtog} $G$ satisfies the $R_{\infty}$-property. The converse also holds if $Z(G)=\{e\}$. We record this as follows (cf. \cite[Theorem 3.4]{ft2015}):
\begin{lemma}\label{sinfty}
Let $G$ be an arbitrary group such that $Z(G)=\{e\}$. Then $G$ has the $R_{\infty}$-property if and only if $G$ has the $S_{\infty}$-property.
\end{lemma}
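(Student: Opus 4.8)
The plan is to prove both implications directly from the definitions, exploiting that $Z(G) = \{e\}$ so that $\Out(G) = \Aut(G)/\Inn(G)$ acts cleanly and the isogredience relation on a coset $\Psi$ reduces to a twisted conjugacy relation. First I would recall the computation carried out just above the statement: fixing a representative $\gamma \in \Psi$, every element of $\Psi$ is of the form $i_a \circ \gamma$, and $i_a \circ \gamma$ is isogredient to $i_b \circ \gamma$ if and only if $b = h a \gamma(h^{-1}) c$ for some $h \in G$ and some $c \in Z(G)$. When $Z(G) = \{e\}$ this collapses to $b = h a \gamma(h^{-1})$, i.e. $a \sim_{\gamma} b$ in the sense of $\gamma$-twisted conjugacy. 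Hence the map $[a]_{\gamma} \mapsto$ (isogredience class of $i_a \circ \gamma$) is a well-defined bijection $\mathcal{R}(\gamma) \to \{\text{isogredience classes of } \Psi\}$, so $S(\Psi) = R(\gamma)$.

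With that correspondence in hand both directions are short. For the forward implication, suppose $G$ has the $R_\infty$-property and let $\Psi \in \Out(G)$ be arbitrary; pick any $\gamma \in \Psi \subseteq \Aut(G)$. Then $S(\Psi) = R(\gamma) = \infty$ because $\gamma$ is an automorphism of $G$ and $G$ has the $R_\infty$-property. Since $\Psi$ was arbitrary, $G$ has the $S_\infty$-property. For the converse, suppose $G$ has the $S_\infty$-property and let $\varphi \in \Aut(G)$ be arbitrary; let $\Psi = \Inn(G)\varphi \in \Out(G)$ be its class and take $\gamma = \varphi$ as the chosen representative. Then $R(\varphi) = S(\Psi) = \infty$. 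Since $\varphi$ was arbitrary, $G$ has the $R_\infty$-property. (One may note this converse is exactly the general observation recorded in the paragraph preceding the lemma, and does not even need $Z(G) = \{e\}$; the hypothesis is only used for the forward direction, to identify $S(\Psi)$ with $R(\gamma)$ rather than with a possibly smaller quotient.)

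The only point requiring care — and the one I would present most carefully — is the well-definedness and bijectivity of the map $\mathcal{R}(\gamma) \to \{\text{isogredience classes of }\Psi\}$, since its surjectivity uses that every element of $\Psi$ has the form $i_a \circ \gamma$ (immediate, as $\Psi = \Inn(G)\gamma$ and $Z(G)=\{e\}$ forces $\Inn(G) \cong G$ so the $a$ is essentially unique), and its injectivity is precisely the equivalence ``$i_a\circ\gamma$ isogredient to $i_b\circ\gamma \iff a\sim_\gamma b$'' derived above. I do not expect any genuine obstacle here: once the preliminary coset computation is invoked, everything is formal. If desired one can cite \cite[Theorem 3.4]{ft2015} for the statement, but the self-contained argument above suffices.
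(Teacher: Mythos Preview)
Your proof is correct and follows essentially the same approach as the paper. The paper does not give a detailed proof of this lemma; it records the key observation in the paragraph preceding the statement (that $\alpha=i_a\circ\gamma$ and $\beta=i_b\circ\gamma$ are isogredient iff $b=ha\gamma(h^{-1})c$ for some $c\in Z(G)$), notes that $S_\infty$ implies $R_\infty$ in general via $G/Z(G)$ and \lemref{msqtog}, states that the converse holds when $Z(G)=\{e\}$, and then cites \cite[Theorem 3.4]{ft2015}. Your argument simply makes explicit the bijection $\mathcal{R}(\gamma)\to\{\text{isogredience classes of }\Psi\}$ that is implicit in that paragraph once $Z(G)=\{e\}$, which is exactly the intended content. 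One minor quibble: surjectivity of your bijection needs only that every element of $\Psi=\Inn(G)\gamma$ has the form $i_a\circ\gamma$ for \emph{some} $a$, not uniqueness of $a$; the uniqueness (equivalently $\Inn(G)\cong G$) is irrelevant there.
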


An automorphism $\varphi$ of $ G $ is said to be \textit{central} if  $ g^{-1}\varphi(g)\in Z(G) $ for all $ g \in G.$ Thus corresponding to every central automorphism $\varphi $ one can associate a homomorphism $ f_\varphi: G\to Z(G) $ given by 
$ f_\varphi(g)=g^{-1}\varphi(g).$ 
The following two lemmas appear in \cite[Propositions 1, 5, 12]{bnn2013}. We include a proof for the sake of completeness.
\begin{lemma}\label{lemma49}
	If the $\varphi$-conjugacy class $[e]_{\varphi}$ of the unit element $e$ of a group $G$ is a subgroup, then it is always a normal subgroup. 
	If $\varphi$ is a central automorphism of $G$,  then $[e]_{\varphi}$ is a subgroup of $G$. 
\end{lemma}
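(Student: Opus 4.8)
The plan is to handle the two assertions of the lemma separately, in both cases working directly with the concrete description $[e]_{\varphi}=\{g\varphi(g)^{-1}\mid g\in G\}$.

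For the first assertion, I would begin with the elementary observation that $[e]_{\varphi}$ is stable under the twisted conjugation action of $G$ on itself: for every $x\in G$ one has $x\,[e]_{\varphi}\,\varphi(x)^{-1}=[e]_{\varphi}$, since $x\big(g\varphi(g)^{-1}\big)\varphi(x)^{-1}=(xg)\varphi(xg)^{-1}$ and $g\mapsto xg$ is a bijection of $G$ (the reverse inclusion follows by replacing $x$ with $x^{-1}$). Now suppose the hypothesis holds, i.e.\ $H:=[e]_{\varphi}$ is a subgroup. Because $e=e\,\varphi(e)^{-1}\in H$, applying the identity above to the element $e$ gives $x\varphi(x)^{-1}\in H$ for every $x\in G$; hence $\varphi(x)=h_x x$ for some $h_x\in H$ depending on $x$. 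Substituting this into $xH\varphi(x)^{-1}=H$ turns it into $xHx^{-1}h_x^{-1}=H$, and since $h_x\in H$ this collapses to $xHx^{-1}=H$. As $x$ is arbitrary, $H=[e]_{\varphi}$ is normal in $G$.

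For the second assertion, assume $\varphi$ is central, so that $z_g:=g^{-1}\varphi(g)\in Z(G)$ for all $g$, equivalently $\varphi(g)=g\,z_g$ with $z_g$ central. Then $g\varphi(g)^{-1}=g\,z_g^{-1}g^{-1}=z_g^{-1}=f_{\varphi}(g)^{-1}$, where $f_{\varphi}(g)=g^{-1}\varphi(g)$, so that $[e]_{\varphi}=\{f_{\varphi}(g)^{-1}\mid g\in G\}$. Next I would verify that $f_{\varphi}\colon G\to Z(G)$ is a group homomorphism; the identity $f_{\varphi}(gh)=f_{\varphi}(g)f_{\varphi}(h)$ requires nothing beyond the fact that the values $z_g$ are central. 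Consequently $\mathrm{im}(f_{\varphi})$ is a subgroup of $Z(G)$, and, a subgroup being closed under inversion, $[e]_{\varphi}=\mathrm{im}(f_{\varphi})^{-1}=\mathrm{im}(f_{\varphi})$ is a subgroup of $G$ (indeed of $Z(G)$).

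Neither part involves any real technical difficulty. The only step that requires a moment's thought is in the first assertion: one must upgrade the manifest invariance of $[e]_{\varphi}$ under \emph{twisted} conjugation to invariance under \emph{ordinary} conjugation, and the point is that the hypothesis ``$[e]_{\varphi}$ is a subgroup'' --- together with $e\in[e]_{\varphi}$ --- is exactly what forces $\varphi(x)x^{-1}\in[e]_{\varphi}$ for all $x$, after which the two notions of conjugation become interchangeable. I expect that to be the main (and rather mild) obstacle.
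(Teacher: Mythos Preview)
Your proof is correct and follows essentially the same approach as the paper. For the second assertion the arguments are identical; for the first, the paper proceeds by the direct one-line computation
\[
h\big(g\varphi(g)^{-1}\big)h^{-1}=hg\,\varphi(hg)^{-1}\cdot\big(h\varphi(h)^{-1}\big)^{-1}\in[e]_{\varphi},
\]
which is exactly the content of your twisted-conjugation-to-ordinary-conjugation conversion, just compressed into a single identity rather than phrased as $xH\varphi(x)^{-1}=H$ followed by absorbing $h_x\in H$.
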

\begin{proof}
	Suppose	$[e]_{\varphi}=\{g\varphi(g)^{-1}\mid g\in G\}$ is a subgroup. Then for any  $ h\in G $, we get \[h(g\varphi(g)^{-1})h^{-1}=hg\varphi(g)^{-1}\varphi(h)^{-1}\varphi(h)h^{-1}=hg\varphi(hg)^{-1}(h \varphi(h)^{-1})^{-1}\in [e]_{\varphi}. \]
	For the second statement, assume that $ \varphi $ is a central automorphism. Then we can write $ \varphi(g)= gf_\varphi(g)$, where $ f_\varphi: G\to Z(G) $ is a homomorphism given by $f_{\varphi}(g)=g^{-1}\varphi(g)$. This implies 
	\[[e]_{\varphi}=\{g\varphi(g)^{-1}\mid g\in G\}=\{gf_\varphi(g^{-1})g^{-1}\mid g\in G\} =\{f_\varphi(g^{-1})\mid g\in G\}. \] Hence $ [e]_{\varphi} $ is a subgroup as $f_{\varphi}$ is a homomorphism.
\end{proof}
Let $N$ be a normal subgroup of $G$ which is stable under $\varphi$ (i.e., $\varphi(N)=N$), then we denoted by $\overline{\varphi}$, the automorphism induced by $\varphi$ on $G/N$ and by $\overline{e}$, the identity element of the factor group $G/N$. 
\begin{lemma}\label{lemma51}
	Let $G$ be a group such that for some automorphism $\varphi$ of $G$, the twisted conjugacy class $[e]_{\varphi}$ is a subgroup of $G$. Suppose that $N$ is a normal $\varphi$-stable subgroup of $G$. Then the $\overline{\varphi}$-conjugacy class $[\overline{e}]_{\overline{\varphi}}$ is a subgroup of $G/N$.
\end{lemma}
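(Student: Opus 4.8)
The plan is to push the subgroup $[e]_\varphi$ forward along the canonical projection $\pi\colon G\to G/N$ and observe that its image is precisely $[\overline{e}]_{\overline{\varphi}}$. First I would record the structural input: since $N$ is normal and $\varphi$-stable, $\varphi$ descends to a well-defined automorphism $\overline{\varphi}$ of $G/N$, and the square
\[\xymatrix{G\ar[r]^{\varphi}\ar[d]_{\pi}&G\ar[d]^{\pi}\\ G/N\ar[r]^{\overline{\varphi}}&G/N}\]
commutes, i.e. $\pi\circ\varphi=\overline{\varphi}\circ\pi$. This is exactly the compatibility already exploited in the proof of \lemref{msqtog}, so nothing new is needed here.

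Next I would compute $\pi([e]_\varphi)$ directly. Writing $[e]_\varphi=\{g\varphi(g)^{-1}\mid g\in G\}$, for each $g\in G$ the intertwining relation gives
\[\pi\bigl(g\varphi(g)^{-1}\bigr)=\pi(g)\,\pi(\varphi(g))^{-1}=\pi(g)\,\overline{\varphi}(\pi(g))^{-1}.\]
Since $\pi$ is surjective, as $g$ ranges over $G$ the element $\bar g:=\pi(g)$ ranges over all of $G/N$; hence
\[\pi\bigl([e]_\varphi\bigr)=\{\bar g\,\overline{\varphi}(\bar g)^{-1}\mid \bar g\in G/N\}=[\overline{e}]_{\overline{\varphi}}.\]

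Finally, by hypothesis $[e]_\varphi$ is a subgroup of $G$, and the image of a subgroup under the group homomorphism $\pi$ is again a subgroup; therefore $[\overline{e}]_{\overline{\varphi}}=\pi([e]_\varphi)$ is a subgroup of $G/N$, which is the assertion. (One could also note, using \lemref{lemma49}, that it is in fact a normal subgroup, though that is not required.)

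I do not anticipate any real obstacle: the entire content is the identification $\pi([e]_\varphi)=[\overline{e}]_{\overline{\varphi}}$, and this rests solely on the commuting square $\pi\varphi=\overline{\varphi}\pi$ together with surjectivity of $\pi$. Everything else is formal group theory.
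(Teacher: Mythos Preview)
Your proof is correct and follows essentially the same idea as the paper's: both recognize that $[\overline{e}]_{\overline{\varphi}}=\{g\varphi(g)^{-1}N\mid g\in G\}=\pi([e]_\varphi)$. The paper then verifies closure under products and inverses by hand, while you invoke the general fact that the homomorphic image of a subgroup is a subgroup; this is a cosmetic difference only.
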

\begin{proof} Let $ x,y\in [\overline{e}]_{\overline{\varphi}}=\{g\varphi(g)^{-1}N\mid g\in G\}$. Then $ x= g\varphi(g)^{-1}N$ and $ y= h\varphi(h)^{-1}N$ for some $ g, h\in G $. This gives \[ xy=g\varphi(g)^{-1} h\varphi(h)^{-1}N=z\varphi(z)^{-1} N\in [\overline{e}]_{\overline{\varphi}},\] for some $ z\in G $ since 
	$[e]_{\varphi}$ is a subgroup of $G$. Similarly it can be shown that  
	$x^{-1}\in[\overline{e}]_{\overline{\varphi}}.$ Hence, the proof.
\end{proof}
The following result is similar to \lemref{msqtog} in the context of the $S_{\infty}$-property.
\begin{lemma}\cite[Lemma 2.3]{FN}\label{qtogs}
	Let $1\rightarrow N\rightarrow G\rightarrow Q\rightarrow 1$ be an exact sequence of groups. Suppose that $N$ is a characteristic subgroup of $G$ and $Q$ has the $S_{\infty}$-property, then $G$ also has the $S_{\infty}$-property.
\end{lemma}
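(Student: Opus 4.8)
The plan is to run the argument for \lemref{msqtog} with Reidemeister classes replaced by isogredience classes, using the fact (established in \secref{isogred}) that elements of a fixed $\Psi\in\Out(G)$ are parametrized, after choosing a base point $\gamma\in\Psi$, as $i_a\circ\gamma$ with $a\in G$, and that two such elements are isogredient precisely when they differ by the relation $b=ha\gamma(h^{-1})c$ for some $h\in G$, $c\in Z(G)$. Fix an arbitrary $\Psi\in\Out(G)$ and a representative $\gamma\in\Psi$. Since $N$ is characteristic, $\gamma(N)=N$, and since $N$ is normal every inner automorphism $i_a$ of $G$ preserves $N$; hence each $\alpha\in\Psi$ descends to an automorphism $\overline{\alpha}$ of $Q\cong G/N$, and $\overline{\alpha\circ\beta}=\overline{\alpha}\circ\overline{\beta}$, $\overline{i_h}=i_{\overline{h}}$, where $\overline{h}$ denotes the image of $h$ under the projection $\pi\colon G\to Q$. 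In particular $\overline{i_a\circ\gamma}=i_{\overline{a}}\circ\overline{\gamma}$, so $\alpha\mapsto\overline{\alpha}$ carries all of $\Psi$ into the single outer class $\overline{\Psi}:=\Inn(Q)\overline{\gamma}\in\Out(Q)$.

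Next I would verify that this assignment respects isogredience and is surjective onto $\overline{\Psi}$. If $\alpha,\beta\in\Psi$ are isogredient in $G$, say $\beta=i_h\circ\alpha\circ i_{h^{-1}}$ with $h\in G$, then applying $\pi$ gives $\overline{\beta}=i_{\overline{h}}\circ\overline{\alpha}\circ i_{\overline{h}^{-1}}$, so $\overline{\alpha}$ and $\overline{\beta}$ are isogredient in $Q$; thus $\alpha\mapsto\overline{\alpha}$ induces a well-defined map from the set of isogredience classes of $\Psi$ to that of $\overline{\Psi}$. For surjectivity, given $\overline{\alpha}\in\overline{\Psi}$ write $\overline{\alpha}=i_{\overline{q}}\circ\overline{\gamma}$ for some $\overline{q}\in Q$, lift $\overline{q}$ to $q\in G$, and observe that $\alpha:=i_q\circ\gamma\in\Psi$ satisfies $\overline{\alpha}=\overline{i_q\circ\gamma}$. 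Hence the induced map on isogredience classes is onto, which gives $S(\Psi)\ge S(\overline{\Psi})$. Since $Q$ has the $S_{\infty}$-property, $S(\overline{\Psi})=\infty$, so $S(\Psi)=\infty$; as $\Psi$ was arbitrary, $G$ has the $S_{\infty}$-property.

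I do not expect a serious obstacle: the content is purely formal, mirroring the commuting-diagram argument of \lemref{msqtog}. The only point needing care is the bookkeeping with $\Out$ — that $\overline{\gamma}$ serves as a consistent base point for $\overline{\Psi}$, that descent to $Q$ is functorial on automorphisms fixing $N$, and that lifts of elements of $Q$ yield elements of $\Psi$ mapping correctly — so that the surjection $\Psi\twoheadrightarrow\overline{\Psi}$ genuinely descends to a surjection on isogredience classes.
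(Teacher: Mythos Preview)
Your argument is correct. Note, however, that the paper does not actually prove this lemma: it is merely stated with a citation to \cite[Lemma~2.3]{FN}, preceded by the remark that it is ``similar to \lemref{msqtog} in the context of the $S_{\infty}$-property.'' Your proof is precisely the natural analogue of the proof of \lemref{msqtog} that this remark suggests, so there is nothing to compare against in the paper itself.
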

\subsection{Proof of \corref{maincor1}}
Let $G'_{\sigma}$ be a twisted Chevalley group of adjoint type. Therefore $Z(G'_{\sigma})=\{e\}$. Now by \thmref{mainthm1} and \thmref{mainthm2} we know that $G'_{\sigma}$ has the $R_{\infty}$-property.
 Hence, in view of \lemref{sinfty}, the twisted Chevalley group $G'_{\sigma}$ (of adjoint type) has the $S_{\infty}$-property. Now let $G'_{\sigma}$ be any twisted Chevalley group, then we have the following short exact sequence (as in equation \eqref{qtog})
 \[1\rightarrow Z(G'_{\sigma})\rightarrow G'_{\sigma}\rightarrow G'_{\sigma}/Z(G'_{\sigma})\rightarrow 1,\]
 where $Z(G'_{\sigma})$ is a characteristic subgroup of $G'_{\sigma}$. Then by virtue of \lemref{qtogs}, $G'_{\sigma}$ has the $S_{\infty}$-property as $G'_{\sigma}/Z(G'_{\sigma})$ possesses the $S_{\infty}$-property.
 
This completes the proof.
\subsection{Proof of \corref{maincor2}}
Suppose that $[e]_{\varphi}$ is a subgroup of $G'_{\sigma}$, then in view of \lemref{lemma51}, $[\overline{e}]_{\overline{\varphi}}$ is a subgroup of $G'_{\sigma}/Z(G'_{\sigma})$. Therefore, by \lemref{lemma49}, $[\overline{e}]_{\overline{\varphi}}$ is a normal subgroup of $G'_{\sigma}/Z(G'_{\sigma})$. Once again recall that Steinberg proved that the twisted Chevalley group of adjoint type $G'_{\sigma}/Z(G'_{\sigma})$ is simple. Therefore either $[\overline{e}]_{\overline{\varphi}}=Z(G'_{\sigma})$ or $[\overline{e}]_{\overline{\varphi}}=G'_{\sigma}/Z(G'_{\sigma})$. Again by \thmref{mainthm1} and \thmref{mainthm2}, $G'_{\sigma}/Z(G'_{\sigma})$ possesses the $R_{\infty}$-property. Therefore $[\overline{e}]_{\overline{\varphi}}=Z(G'_{\sigma})$ which implies  $\overline{\varphi}=\mathrm{Id}_{G'_{\sigma}/Z(G'_{\sigma})}$. Hence $\varphi$ is a central automorphism of $G'_{\sigma}$. 

Conversely, suppose that $\varphi$ is a central automorphism of $G'_{\sigma}$. Then in view of \lemref{lemma49}, the $\varphi$-twisted conjugacy class $[e]_{\varphi}$ is a subgroup of $G'_{\sigma}$. 

This completes the proof.

\medskip
\textbf{Acknowledgement:} We would like to thank Timur Nasybullov for his  wonderful comments and suggestions on this work and also, for helping
with the proof of \lemref{normality}. We thank Swathi Krishna for correcting our English. We also would like to thank the anonymous referee for his/her careful reading and
for many helpful comments and suggestions which improved the readability of this paper.

\begin{thebibliography}{10}
	
	\bibitem{bnn2013}
	V.~G. Bardakov, Timur Nasybullov, and M.~V. Neshchadim.
	\newblock Twisted conjugacy classes of the unit element.
	\newblock {\em Sibirsk. Mat. Zh.}, 54(1):20--34, 2013.
	
	\bibitem{ca}
	Roger~W. Carter.
	\newblock {\em Simple groups of {L}ie type}.
	\newblock Wiley Classics Library. John Wiley \& Sons, Inc., New York, 1989.
	\newblock Reprint of the 1972 original, A Wiley-Interscience Publication.
	
	\bibitem{chevalley55}
	C.~Chevalley.
	\newblock Sur certains groupes simples.
	\newblock {\em Tohoku Math. J. (2)}, 7:14--66, 1955.
	
	\bibitem{fel10}
	Alexander Fel'shtyn.
	\newblock New directions in {N}ielsen-{R}eidemeister theory.
	\newblock {\em Topology Appl.}, 157(10-11):1724--1735, 2010.
	
	\bibitem{fg08}
	Alexander Fel'shtyn and Daciberg~L. Gon\c{c}alves.
	\newblock The {R}eidemeister number of any automorphism of a
	{B}aumslag-{S}olitar group is infinite.
	\newblock In {\em Geometry and dynamics of groups and spaces}, volume 265 of
	{\em Progr. Math.}, pages 399--414. Birkh\"{a}user, Basel, 2008.
	
	\bibitem{fh94}
	Alexander Fel'shtyn and Richard Hill.
	\newblock The {R}eidemeister zeta function with applications to {N}ielsen
	theory and a connection with {R}eidemeister torsion.
	\newblock {\em $K$-Theory}, 8(4):367--393, 1994.
	
	\bibitem{FLT}
	Alexander Fel'shtyn, Yuriy Leonov, and Evgenij Troitsky.
	\newblock Twisted conjugacy classes in saturated weakly branch groups.
	\newblock {\em Geom. Dedicata}, 134:61--73, 2008.
	
	\bibitem{FN}
	Alexander Fel'shtyn and Timur Nasybullov.
	\newblock The {$R_\infty$} and {$S_\infty$} properties for linear algebraic
	groups.
	\newblock {\em J. Group Theory}, 19(5):901--921, 2016.
	
	\bibitem{ft2015}
	Alexander Fel'shtyn and Evgenij Troitsky.
	\newblock Aspects of the property {$R_\infty$}.
	\newblock {\em J. Group Theory}, 18(6):1021--1034, 2015.
	
	\bibitem{gn2019}
	Daciberg~L. Gon\c{c}alves and Timur Nasybullov.
	\newblock On groups where the twisted conjugacy class of the unit element is a
	subgroup.
	\newblock {\em Comm. Algebra}, 47(3):930--944, 2019.
	
	\bibitem{Jiang}
	Bo~Ju Jiang.
	\newblock {\em Lectures on {N}ielsen fixed point theory}, volume~14 of {\em
		Contemporary Mathematics}.
	\newblock American Mathematical Society, Providence, R.I., 1983.
	
	\bibitem{levitt07}
	Gilbert Levitt.
	\newblock On the automorphism group of generalized {B}aumslag-{S}olitar groups.
	\newblock {\em Geom. Topol.}, 11:473--515, 2007.
	
	\bibitem{ll00}
	Gilbert Levitt and Martin Lustig.
	\newblock Most automorphisms of a hyperbolic group have very simple dynamics.
	\newblock {\em Ann. Sci. \'{E}cole Norm. Sup. (4)}, 33(4):507--517, 2000.
	
	\bibitem{MS}
	T.~Mubeena and P.~Sankaran.
	\newblock Twisted conjugacy classes in abelian extensions of certain linear
	groups.
	\newblock {\em Canad. Math. Bull.}, 57(1):132--140, 2014.
	
	\bibitem{timur12}
	Timur Nasybullov.
	\newblock Twisted conjugacy classes in general and special linear groups.
	\newblock {\em Algebra Logika}, 51(3):331--346, 415, 418, 2012.
	
	\bibitem{NaT}
	Timur Nasybullov.
	\newblock Twisted conjugacy classes in {C}hevalley groups.
	\newblock {\em Algebra Logika}, 53(6):735--763, 799--800, 802--803, 2014.
	
	\bibitem{timurnasy2016}
	Timur Nasybullov.
	\newblock The {$R_\infty$}-property for {C}hevalley groups of types {$B_l$},
	{$C_l$}, {$D_l$} over integral domains.
	\newblock {\em J. Algebra}, 446:489--498, 2016.
	
	 \bibitem{nasy2019}
	Timur Nasybullov.
	\newblock Chevalley groups of types $B_n, C_n, D_n$ over certain fields do not possess the $R_{\infty}$-property. To appear in 
	\newblock {\em Topol. Methods Nonlinear Anal}, 2020.
	
	\bibitem{timur2019}
	Timur Nasybullov.
	\newblock Reidemeister spectrum of special and general linear groups over some
	fields contains 1.
	\newblock {\em J. Algebra Appl.}, 18(8):1950153, 12, 2019.
	
		\bibitem{St2}
	Robert Steinberg.
	\newblock {\em Lectures on {C}hevalley groups}, volume~66 of {\em University Lecture Series}.
	\newblock American Mathematical Society, Providence, RI, 2016. xi+160 pp.
	\newblock Notes prepared by John Faulkner and Robert Wilson. Revised and
	corrected edition of the 1968 original [MR0466335].  With a foreword by Robert
	R. Snapp.
	
	\bibitem{steinberg59}
	Robert Steinberg.
	\newblock Variations on a theme of {C}hevalley.
	\newblock {\em Pacific J. Math.}, 9:875--891, 1959.
	
	\bibitem{St}
	Robert Steinberg.
	\newblock {\em Endomorphisms of linear algebraic groups}.
	\newblock Memoirs of the American Mathematical Society, No. 80. American
	Mathematical Society, Providence, R.I., 1968.
	
	\bibitem{tits}
	Jacques Tits.
	\newblock {\em Les ``formes r\'{e}elles'' des groupes de type {$E_{6}$}},
	volume~15 of {\em S\'{e}minaire Bourbaki; 10e ann\'{e}e: 1957/1958. Textes
		des conf\'{e}rences; Expos\'{e}s 152 \`a 168; 2e \`ed. corrig\'{e}e,
		Expos\'{e} 162}.
	\newblock Secr\'{e}tariat math'ematique, Paris, 1958.
	
\end{thebibliography}

\end{document}